\def\be {\begin{equation}}
\def\en{\end{equation}}
\newcommand\tb{}
\newcommand\tr{\textcolor{red}}
\newtheorem{theorem}{Theorem}[section] 
\newtheorem{lemma}[theorem]{Lemma}     
\newtheorem{corollary}[theorem]{Corollary}
\newtheorem{proposition}[theorem]{Proposition}
\theoremstyle{definition}
\newtheorem{definition}[theorem]{Definition}
\newtheorem{example}[theorem]{Example}
\newtheorem*{ack*}{Acknowledgment}
\theoremstyle{remark}
\newtheorem{remark}[theorem]{Remark}
\newtheoremstyle{condition}
{3pt}
{3pt}
{\rm}
{0pt}
{\itshape}
{: }
{0pt}
{\thmname{#1}\thmnumber{#2}\thmnote{(#3)}}
\theoremstyle{condition}
\newtheorem{condition}{C}
\numberwithin{equation}{section}
\newcommand{\N}{\mathbb{N}}
\begin{document}
\begin{abstract}
	We develop conditions for the coding of a Bratteli-Vershik system according to initial path segments to be periodic,
	equivalently for a constructive symbolic recursive scheme corresponding to a cutting and stacking process to produce a periodic sequence.
	This is a step toward understanding when a Bratteli-Vershik system can be essentially faithfully
	represented by means of a natural coding as a subshift on a finite alphabet.
\end{abstract}
\title{Periodic codings of Bratteli-Vershik systems}
\author{Sarah Frick}
\address{Department of Mathematics, Furman University, Greenville, SC 29613 USA}
\email{sarah.frick@furman.edu}
\author{Karl Petersen}
\address{Department of Mathematics,
	CB 3250 Phillips Hall,
	University of North Carolina,
	Chapel Hill, NC 27599 USA}
\email{petersen@math.unc.edu}
\author{Sandi Shields}
\address{College of Charleston, 66 George St., Charleston, SC 29424-0001 USA}
\email{shieldss@cofc.edu}

\maketitle

\section{Introduction}
Cutting and stacking constructions, equivalently Bratteli-Vershik systems, have been used to construct many of the important examples in ergodic theory and to study the fundamental properties of classes of measure-preserving systems. Indeed, (an isomorphic copy of) every measure-preserving system can be presented by such a construction.
Similar statements apply to topological dynamical systems on the Cantor set.
There are generalizations to actions by other groups, infinite measure systems, and nonsingular actions--see, for example, \cite{Danilenko2016,Danilenko2004,DaiSilva2015, Eigen2014}.
We are interested in finding conditions for representing a system that is presented in this way as a subshift on a (usually finite) alphabet.
A transformation defined by cutting and stacking, or as the Vershik map on a Bratteli diagram, might not be continuous and might not be defined everywhere.
If one can explicitly produce a subshift that is measure-theoretically isomorphic to the original system, then the resulting system, which consists of a continuous map on a compact metric space, will provide a setting in which questions about topological dynamical properties (such as mixing) and combinatorial properties (such as complexity) will make sense.

A system is of {\em rank one} if it is measure-theoretically isomorphic to a Lebesgue-measure-preserving map on a finite interval that is defined by a cutting and stacking process in which at each stage there is a single tower and another tower of height one (the ``reservoir" of ``spacers"). See \cite{Ferenczi1997} for equivalent conditions, for example that the partitions consisting of the levels of Rokhlin towers generate the full sigma-algebra.
A {\em constructive symbolic rank one} presentation of a system represents it isomorphically as a subshift.
The conditions listed in \cite{Ferenczi1997} for a system to be rank one were known to be equivalent, with the possible exception that for odometers the symbolic sequence defined by the cutting and stacking recursion is periodic, so that the naturally associated subshift is finite, while the odometer itself is infinite.
The paper \cite{AFP} completed the equivalence of the definitions of rank one by producing for every rank one system, including in particular the $2$-odometer, a constructive symbolic rank one representation, thus a subshift measure-theoretically isomorphic to the given rank one system.
Previously, Kalikow \cite{Kalikow1984} proved that if the symbolic sequence naturally associated with a rank one cutting and stacking procedure is not periodic, then the original system is isomorphic to the subshift defined by the sequence.
El Abdaloui, Lemanczyk, and de la Rue \cite[Lemma 10]{ElAbdaloui2014} showed that, on the other hand, if the sequence associated with the rank one cutting and stacking is periodic, then the system is isomorphic to an odometer.
We extend this result in Theorem \ref{thm:rankone} below.
Foreman and Weiss \cite{ForemanWeiss2015, ForemanWeiss2016} studied the structure with respect to joinings of the class of systems with odometer factors and its relevance for the realization and classification problems in the theory of dynamical systems.
For history and other work on these questions, see the discussions in \cite{AFP} and the references given there.

A system defined by cutting and stacking has for each $k \geq 1$ a natural partition $\gamma_k$ defined by the levels of the towers at stage $k$ of the construction.
Similarly, a Bratteli-Vershik system has for each $k \geq 1$ a partition $\alpha_k$ defined by the cylinder sets corresponding to initial segments of $k$ edges starting at the root (details are given below).
These partitions correspond to measure-preserving factors of the system, which we call  {\em $k$-factors}, or {\em level-$k$ factors}, as in \cite{dm2008}.
	(The image of the orbit of a path may be called a $k$-coding of the path, or of its orbit.)
We say that a measure-preserving system defined in one of these two ways is {\em essentially $k$-expansive} if the partition $\gamma_k$ (or $\alpha_k$) generates the full sigma-algebra under the action of the transformation, up to sets of measure $0$.
An essentially $k$-expansive system is measure-theoretically isomorphic to its $k$-factor subshift with the push-forward invariant measure under the factor mapping.
Our main goal is to determine when a system is essentially $k$-expansive for some $k$.
For the Pascal system with the usual left-right ordering of incoming edges, essential $1$-expansiveness was proved in \cite{xman}; see also \cite{MelaPetersen2005} and a similar result for classes of systems in \cite{Mela2006, Frick2009}.
Essential $3$-expansiveness for the Pascal system with any ordering of the edges was proved in \cite{FPS2017}.

Downarowicz and Maass \cite{dm2008} showed that the topological dynamical system defined by a simple, properly ordered Bratteli-Vershik diagram with a bounded number of vertices per level, which they called {\em topological finite rank} but which we prefer to call {\em bounded width}, is either (topologically) expansive (meaning that for some $k$ the partition $\alpha_k$ generates the topology) or else the system is topologically conjugate to an odometer.
In \cite{FPS2017} we gave necessary and sufficient conditions for a simple properly ordered Bratteli-Vershik system to be topologically conjugate to an odometer: there should be infinitely many {\em uniformly ordered} levels.
Below we extend this condition to more general diagrams.

In view of these remarks, an obvious obstruction to essential $k$-expansiveness of an infinite system is that the $k$-factor is finite; for an odometer presented in the usual way, this happens for every $k$.
Therefore, in order to extend the above-mentioned results, we investigate for Bratteli-Vershik systems whether some or all $k$-factors are finite and whether such a system is isomorphic to an odometer.
Note that \cite{AFP}  presents a Bratteli-Vershik system isomorphic to an odometer {\em and} to each of its $k$-factors, $k \geq 1$.
On the other hand, an infinite entropy, uniquely ergodic, simple, properly ordered Bratteli-Vershik system (such exist by the \tb{Jewett-Krieger \cite{Jewett1970,Krieger1972} and Herman-Putnam-Skau \cite{HPS1992}} theorems) {\em cannot} be isomorphic to any of its $k$-factors.
Any Bratteli-Vershik system with a finite ergodic measure is measure-theoretically isomorphic to the inverse limit of its $k$-factors.
So if every $k$-factor is finite, then the system is isomorphic to an odometer or a permutation of finitely many points.

In Theorem \ref{thm:rankone} we reprove and extend the result from \cite{ElAbdaloui2014} to show that for a Bratteli-Vershik system in the standard rank one form from \cite{AFP}, if some $k$-factor is finite then the diagram must have a special form, every $k$-factor is finite, and the system is measure-theoretically isomorphic to an odometer.
Theorem \ref{thm:general} extends this result to a class of more general cutting and stacking procedures and associated diagrams. In this class there are two minimal paths, one of which is a fixed point of the transformation.
In addition, in the cutting and stacking presentation the base of the tower does not come from a spacer reservoir.
We then show that within this class some $k$-factor is finite if and only if eventually all levels of the diagram satisfy what we call the {\em local deficit condition with respect to level $k$}.
This condition, which extends the definition of {\em uniformly ordered} in \cite{FPS2017},
imposes a structure on the partial ordering of the edges and how the spacers are interspersed,
producing a sort of local incomplete periodicity (see Definitions \ref{def:locdeficit} and \ref{def:semi}).
Section \ref{sec:examples} presents examples of the various possibilities for $k$-factors.

\section{Setup and notation}\label{sec:setup}
Throughout, we shall assume that $\mathcal B=(\mathcal{V},\mathcal{E},\leq)$ is an ordered Bratteli diagram with a finite number $K_n+1$ of vertices $v(n,j), j=1,\dots, K_n+1$, at every level $n=0,1,\dots$;
$K_0=0$, so that at level $0$ there is just a single  vertex, $v(0,1)$, called the {\em root};
every vertex is the source of at least one edge;
and every vertex except the root is the range of at least one edge.
\tb{$\mathcal E_n$ denotes the set of directed edges from level $n$ to level $n+1$, $n \geq 0$.}
(As with any Bratteli diagram, the range function $r:\mathcal{E}\to \mathcal{V}$ maps an edge to its terminal vertex, and the source function $s:\mathcal{E}\to \mathcal{V}$ maps an edge to its source.)
 There is a partial edge ordering $\xi:\mathcal{E}\to \N$ which gives a total ordering on edges with the same range and extends to a partial ordering of infinite paths in the usual way.  
 $X$ is the space of infinite directed paths $x=x_0 x_1 \dots$, each $x_i \in \mathcal E_i$,
beginning at the root, and $T$ is the adic (or successor, or Vershik) transformation defined on the set of non-maximal paths. 
 Similarly, $T^{-1}$ is defined on the set of non-minimal paths.
Two paths $x$ and $y$ are in the same orbit if they are tail equivalent, meaning there exists an $N\in\mathbb{N}$ such that for all $n\geq N$, $x_n=y_n$. 
See, for example, \cite{Durand2010, MelaPetersen2005, BezuglyiKarpel2016} for background.

\tb{In order to model classical cutting and stacking constructions in ergodic theory, we assume 
\begin{condition}\label{cond:cond1}
for all $n \geq 1$ we have all $K_n \geq 1$, so that there are at least two vertices at each level after the root, and 
\end{condition}
\begin{condition}\label{cond:cond2}
 each {\em spacer vertex} $v(n, K_n+1)$ has a single incoming edge from vertex $v(n-1, K_{n-1}+1)$ (in other words $|r^{-1}(v(n,K_n+1))|=1$ and $s(r^{-1}(v(n,K_n+1))=v(n-1,K_{n-1}+1)$.
 \end{condition}
}
 There is a unique infinite path that passes through $v(n,K_n+1)$ for all $n>0$. We denote this path by $x_s$ and define $T(x_s)=x_s$.
 We note that if for all large enough $n$ we have $|s^{-1}v(n,K_n+1)|=1$ (so that eventually each spacer vertex has only one outgoing edge),
 then $x_s$ is an isolated path. Because we allow the case when $x_s$ is isolated, this setup is not very restrictive.
 These systems are not {\em aperiodic} (see \cite{Medynets2006, BezuglyiKwiatkowskiMedynets2009, BezuglyiKwiatkowskiMedynetsSolomyak2010, BezuglyiKwiatkowskiMedynetsSolomyak2013}), but they can be {\em almost simple} (see \cite{Danilenko2001, Yuasa2007}).

\begin{definition}\label{def:pseudocomplete}
  We say that level $n$ is \emph{pseudo-complete} if for every $j\in \{1,\dots,K_{n-1}\}$ and $j'\in \{1,\dots,K_n\}$ there is at least one edge connecting $v(n-1,j)$ and $v(n,j')$.\end{definition}

 Denote by $\dim(n,j)$ the number of segments from the root to $v(n,j)$. We code infinite paths starting at the root and their orbits under the adic transformation $T$ according to their first $k$ edges after the root:
if there are $d_k$ segments from the root to level $k$, assign the symbols from the alphabet $A_k=\{0,...,d_k-2,s_k\}$ to the segments from the root to vertices at level $k$, where the unique segment from the root to vertex $(k,K_k+1)$ is given the label $s_k$,
which can be thought of as a ``spacer''.
This produces a (possibly many-to-one) measurable map $\phi_k$ from $X$ to $A_k^\mathbb{N}$ as follows.
Denote by $\alpha_k$ the partition of $X$ into the cylinder sets $E(e_0 \dots e_{k-1})$ determined by the initial segments $e=e_0\dots e_{k-1}$, and by $\alpha_k(x)$ the letter of $A_k$ corresponding to the cell of $\alpha_k$ to which a path $x \in X$ belongs.
Then the {\em $k$-coding} $\phi_k: X \to A_k^{\mathbb{N}}$ satisfies $(\phi_kx)_i=a$ if and only if $T^ix \in E(e)$, and $e$ corresponds to $a\in A_k$.
Then $\phi_k(x)= \alpha_k(x)\alpha_k(Tx) \dots$. The {\em $n$-symbols} of \cite{dm2008} provide a convenient way to picture simultaneously all $k$-codings for $k \leq n$ of orbit segments of length $\dim(n,j)$ of minimal segments to vertices $v(n,j)$ at level $n$. When the context is clear, we will refer to $s_k$ just as $s$.
By a {\em transitive (respectively forward transitive) path we mean one whose orbit (respectively forward orbit)} intersects every nonempty cylinder set.

Fix $k \geq 1$, $n \geq k$, and $j \in \{1,\dots,K_n+1\}$.
Let $x$ be a path in $X$ that follows only minimal edges from the root to $v(n,j)$.
Then $B^{(k)}(n,j)=\alpha_k(x) \alpha_k(Tx) \dots \alpha_k(T^{\dim(n,j)-1}x)$ is called the {\em basic block in the $k$-coding} at $v(n,j)$. So each basic block in the $k$-coding is an element of $A_k^*=\cup_{m > 0}A_k^m$, where $A_k^m$ is the set of all words of length $m$ in the alphabet $A_k$.
When $k=1$ we shall just refer to this as  the ``basic block" at $v(n,j)$ and denote it by $B(n,j)$.
\tb{We assume 
\begin{condition}\label{cond:cond3}
for each $n\geq k$ and each $j \leq K_n$ the minimal edge to $v(n,j)$ does not have \tb{ as its source the spacer vertex $v(n-1,K_{n-1}+1)$}, so that basic blocks $B^{(k)}(n,j), 1 \leq j \leq K_n$, in the $k$-coding do not begin with \tb{the spacer symbol $s_k$}.
\end{condition}}

Every segment of length $k$ is an extension of a segment of each length $k'<k$, and hence there is a (one-block)
map $\pi_{k,k'}$ that takes $B^{(k)}(n,j)$ to $B^{(k')}(n,j)$ by replacing the symbol associated to each segment of length $k$ by the symbol  associated to its initial segment of length $k'$.

There is a natural correspondence between adic systems as above and the cutting and stacking constructions familiar in ergodic theory.
Let $D_1, \dots ,D_{K_1}$ be finite blocks on the alphabet $A_1\setminus \{s_1\}$ such that every symbol of $A_1\setminus \{s_1\}$ appears in exactly one $D_i$.
(See below for why we use blocks rather than symbols.)
The basic blocks in the 1-coding at all levels $n > 1$ can be constructed by a recursive scheme, in which the parameters $g(n,j,i) \in \{1,\dots,K_n\}$, $a(n,j,i)\geq 0$, and $q(n,j) > 0$ are determined by the diagram, as follows:

\be
\begin{aligned}
B(1,j)&=D_j   \text{ for } j=1,\dots,K_1, \quad B(1,K_1+1)=s;\\
&\text{ and for } n > 0 \text{ and } j=1,\dots,K_{n+1},\\
B(n+1,j)&=B(n,g({n,j,0})) s^{a(n,j,0)} B(n,g({n,j,1})) s^{a(n,j,1)} \dots\\
&s^{a(n,j,q(n,j)-2)}B(n,g(n,j,q(n,j)-1)) s^{a(n,j,q(n,j)-1)};\\
  &\text{while }B(n+1,K_{n+1}+1)=s.
\end{aligned}\label{eq:recursion}
\en
See \cite[Definition 7.5.6]{FerencziMonteil2010} for a similar recursion, with an added recognizability condition.

If we code paths in the diagram by initial segments of length  $k > 1$, then the alphabet changes to $A_k$ and there are usually multiple segments from the root to each vertex at level $k$. The basic blocks in the $k$-coding at all levels $k$ and higher can be constructed by telescoping the adic system from the root vertex $v(0,0)$ to level $k$ and applying  the same recursive scheme.
Specifically, for any $n>k$ and $ j=1,\dots,K_n$, $B(n,j)$ in the new diagram corresponds to $B^{(k)}(n+k-1, j)$ in the original diagram. In (\ref{eq:recursion}) above, we are starting with blocks $D_j$ on $A_1 \setminus \{s_1\}$ instead of symbols in $A_1$
{because the diagram in question may have resulted
from telescoping from the root to level $k$ of a previous diagram that had single edges from the root to level $1$.}

Such a recursive scheme specifies how starting with $K_1+1$ intervals (the last one denoted by $s$ and thought of as a spacer) the resulting towers are to be cut and stacked in some order with strings of lengths $a(n,j,i)$ of spacers in between.
The blocks $B(n,j)$ will specify in the resulting measure-preserving system the itinerary of a point through the cells of the partition according to the $K_1+1$ intervals at the initial stage.
Conversely, such a recursive scheme determines the diagram to which it corresponds. See Figure \ref{fig:rec}.

\begin{figure}
  \begin{minipage}[t]{.45\textwidth}
  \centering
  \begin{tikzpicture}
    \foreach \i in {0,2,4}{
    \foreach \j in {-2,0,2,4}{
    \fill(\i,\j) circle (4pt);}}
    \foreach \i in{0,2}{
    \foreach \j in {-2,0,2}{
    \draw(\i,\j)--(0,\j+2);
    \draw(\i,\j)--(2,\j+2);
    \draw(\i,\j)--(4,\j+2);}}
    \fill(2,5) circle (4pt);
    \foreach \i in {0,4}{
    \draw(\i,4)--(2,5);}
    \draw[rounded corners](2,5)--(.8,4.8)--(0,4);
    \draw[rounded corners](2,5)--(1.2,4.3)--(0,4);
    \draw[rounded corners](2,5)--(1.8,4.5)--(2,4);
    \draw[rounded corners](2,5)--(2.2,4.5)--(2,4);
    \draw(4,4)--(4,-2);
    \draw[rounded corners](4,4)--(2,3.3)--(0,2);
    \draw[rounded corners](4,4)--(2,2.7)--(0,2);
    \draw[rounded corners](4,4)--(3,3.3)--(2,2);
    \draw[rounded corners](4,4)--(3,2.7)--(2,2);
    \node at (-.1,2.4){\tiny 1};
    \node at (.5,2.7){\tiny 4};
    \node at (.7,2.5)[circle,fill=white, inner sep=.5pt]{\tiny 2};
    \node at (1,2.5)[circle,fill=white,inner sep=0pt]{\tiny 3};
    \node at (.9,2.2)[circle,fill=white,inner sep=0pt]{\tiny 5};
    \node at (1.7,2.25)[circle,fill=white,inner sep=0pt]{\tiny 3};
    \node at (2,2.5)[circle,fill=white,inner sep=1pt]{\tiny 1};
    \node at (2.3,2.55)[circle,fill=white,inner sep=0pt]{\tiny 2};
    \node at (2.7,2.7)[circle,fill=white,inner sep=0pt]{\tiny 4};
    \node at (2.6,2.3)[circle,fill=white,inner sep=0pt]{\tiny 5};
  \end{tikzpicture}
  \subcaption{The above edge ordering at level 1 corresponds to a recursion beginning $B(2,1)=B(1,1)s^2B(1,2)s^1$ and $B(2,2)=B(1,2)sB(1,1)s^2$.}
  \label{fig:rec}\end{minipage}\hfill\begin{minipage}[t]{.45\textwidth}
  \begin{center}
   \begin{tikzpicture}
    \foreach \i in {0,2,4,6}
    {\fill(0,\i) circle (4pt);
    \fill (2,\i) circle (4pt);}
    \fill (1,7) circle (4pt);
    \draw (1,7)--(0,6)--(0,4);
    \draw (1,7)--(2,6)--(2,0);
    \draw (2,6)--(0,4);
    \foreach \i in {2,4}
    {\draw (2,\i)--(0,\i-2);
     \draw[rounded corners] (0,\i)--(.4,\i-1)--(0,\i-2);
     \draw[rounded corners] (0,\i)--(-.4,\i-1)--(0,\i-2);
     \node at (-.5,\i-1){\tiny 1};
     \node at (.5,\i-1){\tiny 3};
     \node at (1,\i-1.2)[circle,fill=white,inner sep=0pt]{\tiny 2};}
     \node at (0, 6)[rectangle,draw,fill=white]{0};
     \node at (2,6)[rectangle,draw,fill=white]{$s$};
  \end{tikzpicture}
\end{center}
  \subcaption{$B(3,1)$ appears both explicitly and non-explicitly in the coding of the minimal path through $v(4,1)$.}
\label{fig:explicit}
\end{minipage}
\caption{}
\end{figure}

Any diagram for which the far right vertex at level $n$ has the far right vertex at level $n-1$ as its only source can be ordered to correspond to some such recursion.
We make the standing assumption that (cf. \cite{Berthe2017}) the diagram and its associated recursion system are \tb{{\em growing}:
\begin{condition}\label{cond:cond4}
$\lim_{n \to \infty} \min\{ |B(n,j)|:{j \leq {K_n}}\}  = \infty$.
\end{condition}
}

 Given a vertex $v(n,j)$, a minimal segment from the root down to $v(n,j)$ defines a minimal cylinder set $C(n,j)$.
 For $1 \leq k \leq n < m, 1 \leq j \leq K_n +1, 1 \leq i \leq K_m +1$, and $x \in C(n,j)$, let
 \be
 \mathscr S (n,j;m,i)=\{p=0,1,\dots, |B^{(k)}(m,i)|-1 : T^px \in C(n,j)\}
 \en
 denote the set of positions in $B^{(k)}(m,i)$ at which $B^{(k)}(n,j)$ ``appears explicitly" in the recursive construction.
 More precisely, we define an {\em explicit appearance} of $B^{(k)}(n,j)$ in $B^{(k)}(m,i)$ to be a subblock of $B^{(k)}(m,i)$ which equals $B^{(k)}(n,j)$ and appears in $B^{(k)}(m,i)$ at position $p$ for some $p \in \mathscr S(n,j;m,i), 0 \leq p \leq |B^{(k)}(m,i)| - |B^{(k)}(n,j)|$.
 An {\em explicit appearance} of $B^{(k)}(n,j)$ in $\phi_k(y)$, for $y \in X$, is a subblock of $\phi_k(y)$ which equals $B^{(k)}(n,j)$ and appears in a position
 \be
 p \in \mathscr T(n,j;y) = \{p \in \mathbb N: T^p y \in C(n,j)\}.
 \en

For example consider the recursion given by:
\be \begin{array}{cl}
    B(1,1)= & 0 \\
    B(2,1)= & B(1,1)s \\
    B(3,1)=& B(2,1)sB(2,1) \\
    B(4,1)= & B(3,1)sB(3,1)
  \end{array}\en
Then we have that $B(3,1)=0ss0s$ and $B(4,1)=0ss0ss0ss0s$. A corresponding Bratteli Vershik system is pictured in Figure \ref{fig:explicit}. If $x$ is a minimal path passing through $v(4,1)$ we would say that $B(3,1)$ appears explicitly in $\phi(x)$ starting at position 0 and again at position 6, since $x$ and $T^6x$ are both in $C(3,1)$. However, $B(3,1)$ also appears in $\phi(x)$ starting at position 3. This is not an explicit appearance of $B(3,1)$ since $T^3x$ is not in $C(3,1)$.

 {It will also occasionally be helpful to think not of {coding paths} by initial segments of length $k$ but {\em by the vertices at level $k$}, which results from mapping each initial segment of length $k$ to its terminal vertex.}
  Given the coding by vertices at level $k$, we can expand each vertex to its basic block to get the $k$-coding, except without knowledge of \tb{the placement of} the initial coordinate.
 Conversely, given a $k$-coding and the set of positions at which each basic block appears explicitly, we can replace those basic blocks by the unique vertices to which they correspond to get the coding by vertices at level $k$.

The adic system in the example above corresponds to a {\em rank one} cutting and stacking construction. Each rank one cutting and stacking construction leads to the following constructive symbolic recursive system on the alphabet $\{0,s\}$, in which $B_n$ corresponds to $B(n,1)$:
\be
\begin{aligned}
	&B_0=0,\\
	&B_{n+1}=B_n s^{a(n,0)}B_n \dots B_n s^{a({n,q_n-1})} \text{ for } n \geq 0.
	\end{aligned}
\en
In this setup we are assuming there are only two edges leaving the root vertex.
However, when we code by initial segments of length $k$, we can refer back to Equation \ref{eq:recursion} and then with respect to the alphabet $A_k$ we would start with $B^{(k)}_0=D_1=01\dots d_k-2$.

For such a rank one system, if
\be\label{eq:sumcond}
 \sum_{n=0}^\infty \frac{\sum_{i=0}^{q_n-1}a(n,i)}{q_n|B_n|} < \infty,
 \en
then there is a unique non-atomic finite shift-invariant measure on the subshift consisting of all two-sided sequences in $\{0,s\}^\N$ all of whose finite subblocks appear as subblocks of the $B_n$ (see, for example \cite{Ferenczi1997,AFP}).

\section{Periodic codings of rank one systems}

\begin{theorem}\label{thm:rankone}
	Let $B_n, n \geq 0,$ be the sequence of basic blocks in a constructive symbolic rank one construction as above and let $\omega \in \{0,s\}^\N$ be the one-sided infinite sequence such that for each $n \geq 0, \omega=B_n \dots .$
	Suppose that $\omega=\omega_0\omega_1\dots$ is periodic and $P$ is the block of minimal length $|P| \geq 1$ such that $\omega=PPP\dots.$
	Denote by $x_{\min} \in X$ the infinite path from the root that for all $n$ follows the minimal edge entering $v(n,1)$.
	Then: \\
	(1) There are $N \in \N$ and $a \geq 0$ such that for all $n \geq N$ we have $a(n,q_n-1)=0$ and for all $i<q_n-1$ all $a(n,i)=a$.\\
	(2) For every $k \geq 1$ the $k$-coding of $x_{\min}$ by the first $k$ edges is periodic.\\
	(3) With its unique nonatomic invariant measure the system is measure-theoretically isomorphic to an odometer.\\
	(4) If $a=0$ the restriction of $T$ to $X\setminus \{x_s\}$  is topologically conjugate to an odometer.
\end{theorem}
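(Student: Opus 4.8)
The plan is to establish the four assertions in order, with (1) carrying essentially all of the work and (2)--(4) following from its structural conclusion together with the rank one facts recalled in Section~\ref{sec:setup}. First I would note that $\phi_1(x_{\min})=\omega$: the initial $|B_n|$ symbols of $\phi_1(x_{\min})$ are exactly $B(n,1)=B_n$, and since $|B_n|\to\infty$ by Condition~\ref{cond:cond4} this forces $\phi_1(x_{\min})=\lim_n B_n=\omega$. Write $p=|P|$ for the minimal period, so that the periods of $\omega$ (the $q$ with $\omega_{m+q}=\omega_m$ for all $m$) are exactly the multiples of $p$. The crux of (1) is that every explicit copy of $B_n$ in $\omega$ begins at a period of $\omega$. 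Indeed the copies of $B_n$ in $B_{n+1}$ start at $P_i=i|B_n|+\sum_{j<i}a(n,j)$, $0\le i\le q_n-1$, and there $\omega_{P_i+t}=(B_n)_t=\omega_t$ for $0\le t<|B_n|$; once $|B_n|\ge p$ this propagates by $p$-periodicity to $\omega_{m+P_i}=\omega_m$ for all $m$, so $P_i$ is a period and hence $p\mid P_i$. Taking consecutive differences $P_{i+1}-P_i=|B_n|+a(n,i)$ gives $a(n,i)\equiv-|B_n|\pmod p$ for every internal gap $i\le q_n-2$.

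To upgrade this congruence to equality I would use a run-length bound. Since $B_n$ begins with $0$ (Condition~\ref{cond:cond3}) and ends with, say, $e_n\ge0$ spacers, the run of $s$'s in $\omega$ across the seam after the first copy of $B_n$ has length exactly $e_n+a(n,i)$; as $\omega$ is $p$-periodic with at least one $0$ per period, no run of $s$'s exceeds $p-1$, whence $e_n+a(n,i)\le p-1$. Together with $a(n,i)\equiv-|B_n|\pmod p$ this pins every internal gap to the unique representative $a(n,i)=(-|B_n|)\bmod p$ in $[0,p)$, so all internal gaps at level $n$ coincide. The same bound gives $e_n\le p-1$, and $e_{n+1}=e_n+a(n,q_n-1)\ge e_n$ shows $e_n$ is bounded and nondecreasing, hence eventually constant; past the stabilization point $a(n,q_n-1)=0$. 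Substituting $a(n,q_n-1)=0$ into $|B_{n+1}|=q_n|B_n|+(q_n-1)a(n,i)$ and reducing mod $p$ yields $|B_{n+1}|\equiv|B_n|$, so $|B_n|\bmod p$, and therefore the common internal gap, is eventually constant. This is precisely (1), with $a=(-|B_N|)\bmod p$.

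For (2) I would exploit (1). For $n\ge N$ the recursion is $B_{n+1}=(B_ns^a)^{q_n-1}B_n$, so writing $W_n=B_ns^a$ one gets $W_{n+1}=W_n^{q_n}$ and hence $\omega=(B_ns^a)^\infty$ for every $n\ge N$; put $p_n=|B_n|+a$. Fix $k$ and take $n\ge\max(N,k)$. The explicit copies give $T^{jp_n}x_{\min}\in C(n,1)$ for all $j$, so these points share the minimal segment to $v(n,1)$, i.e.\ the same first $n$ edges. I would then check coordinatewise that $\phi_k(x_{\min})$ has period $p_n$: for $0\le r<|B_n|$ the first $n$ edges of $T^{jp_n+r}x_{\min}=T^r(T^{jp_n}x_{\min})$ depend only on $r$, since applying $T$ inside the level-$n$ tower cycles the first $n$ edges through the segments to $v(n,1)$ while fixing higher edges; and for the $a$ spacer positions the path sits at the spacer vertex $v(n,K_n+1)$, whose unique segment is the level-$n$ prefix of $x_s$, again independent of $j$. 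In either range $T^{jp_n+r}x_{\min}$ and $T^rx_{\min}$ agree on their first $k\le n$ edges, so $\phi_k(x_{\min})_{jp_n+r}=\phi_k(x_{\min})_r$ and $\phi_k(x_{\min})$ is periodic.

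Parts (3) and (4) are then bookkeeping on top of $W_{n+1}=W_n^{q_n}$. For (3), the unique nonatomic invariant measure $\mu$ is the weak-$*$ limit of the tower averages $|B_m|^{-1}\sum_{i<|B_m|}\delta_{T^ix_{\min}}$ (rank one; see \cite{Ferenczi1997,AFP}), so $\mu\circ\phi_k^{-1}$ is the uniform measure on the finite shift orbit of the periodic sequence $\phi_k(x_{\min})$; thus every $k$-factor is finite, and by the inverse-limit fact recalled in the introduction the infinite, ergodic, nonatomic system is measure-theoretically isomorphic to an odometer. For (4), when $a=0$ the recursion is $B_{n+1}=B_n^{q_n}$ for $n\ge N$, so above level $N$ Conditions~\ref{cond:cond2} and~\ref{cond:cond3} make the diagram a pure odometer---$q_n$ ordered edges $v(n,1)\to v(n+1,1)$ and, with $a=0$, the spacer vertex $v(n,K_n+1)$ emitting only the single edge to $v(n+1,K_{n+1}+1)$, so $x_s$ is isolated and $X\setminus\{x_s\}$ is compact. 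I would define the conjugacy by sending $x\in X\setminus\{x_s\}$ to the sequence of edge-orders it uses from $v(n,1)$ to $v(n+1,1)$ for large $n$ (together with the finite data below level $N$) and verify it is a homeomorphism intertwining $T$ with odometer addition. The single real obstacle is the rigidity step in (1): converting the mod-$p$ congruence into exact equality of the internal gaps and the vanishing of the terminal gaps, both of which rest on the cap of $p-1$ on spacer runs forced by periodicity.
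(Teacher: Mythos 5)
Your proof is correct, and its global architecture matches the paper's: part (1) carries the combinatorial weight, (2) follows by transporting the stabilized recursion to the $k$-codings along the orbit of $x_{\min}$, (3) follows from finiteness of all $k$-factors plus the inverse-limit fact, and (4) from inspecting the diagram above the stabilization level. Where you genuinely diverge is in the execution of (1). The paper first shows, via the commuting-words lemma (two words commute iff they are powers of a common word), that $P$ can occur in $\omega$ only at multiples of $|P|$; it then deduces that all gaps between consecutive explicit appearances of $B_n$ in $\omega$ equal a single number $a_n$ (two different gaps would force $P=s^k$, impossible since $P$ begins with $0$), and finishes with the identity $a(n,q_n-1)+a_{n+1}=a_n$: since $a_{n+1}<a_n$ can happen only finitely often, eventually $a(n,q_n-1)=0$ and $a_{n+1}=a_n$. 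You instead need only the elementary fact that the periods of a one-sided infinite word are the multiples of its minimal period, derive the congruence $a(n,i)\equiv-|B_n|\pmod{p}$ for the internal gaps, pin it to equality by the cap of $p-1$ on spacer runs (since $P$ contains a $0$), and eliminate the terminal gaps with the bounded, nondecreasing trailing-spacer count $e_n$. The two monotonicity arguments are dual --- indeed $e_n+a_n$ is constant in $n$, so your increasing $e_n$ and the paper's decreasing $a_n$ stabilize simultaneously --- but yours is self-contained, avoiding the combinatorics-on-words citation, at the cost of one extra step (the eventual constancy of $|B_n|\bmod p$) to see that the common internal gap is the same $a$ at all large levels, which the paper gets for free from $a_{n+1}=a_n$. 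One point you should add in (4): as the paper does, invoke the growing condition (C\ref{cond:cond4}) to rule out that $q_n=1$ for all large $n$; otherwise $X\setminus\{x_s\}$ would be a finite periodic orbit rather than an infinite odometer, and the claimed conjugacy to an odometer would fail.
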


\begin{proof}
(1) Recall that
two finite words on a finite alphabet $A$ commute if and only if they are powers of the same word: $u,v \in A^*=\cup_{n \geq 0}A^n, uv=vu$ implies there are $w  \in A^*, i,j \in \N$ such that $u=w^i,v=w^j$
\cite[Prop. 1.3.2, p. 8]{Lothaire1983}
(see also \cite[Prop. 1.3.5]{Lothaire1983}, \cite[Fine and Wilf, Th. 8.14, p. 272]{Lothaire2002}).
Thus $P$ can appear in $\omega$ only at multiples of $|P|$, since otherwise we would find a factorization $P=uv=vu$, leading to $P=w^r$ for some $r$ and some block $w$ of length shorter than $|P|$.

Now suppose that $N$ is large enough that $|B_N|>|P|$.
If $n \geq N$ and there are explicit appearances of the blocks $B_n s^j B_n$ and $B_n s^{j'} B_n$ in $B_{n+1}$ and $j' > j$,
then $|B_n|+j$ and $|B_n|+j'$ are both multiples of $|P|$, so
 $j' -j$ has to be a multiple of $|P|$.
Then $j'\geq |P|$, so in the subblock $s^{j'} P$ of $\omega$ we see at least two appearances of $P$.
This forces $P = s^k$ for some $k \leq j'$, which is impossible, since $P$ begins with $0$.
Therefore, there is an $a_n \geq 0$ such that consecutive explicit appearances of $B_n$ in the construction are separated by $s^{a_n}$.  Likewise, there exists an $a_{n+1} \geq 0$ such that consecutive explicit appearances of $B_{n+1}$ in the construction are separated by $s^{a_{n+1}}$.
Thus for all $n \geq N$,
\be
\begin{aligned}
B_{n+2}&=[B_{n+1}] s^{a_{n+1}} [B_{n+1}] \dots\\
	  &=[B_n \dots B_n s^{a(n,q_n-1)}] s^{a_{n+1}}[B_n \dots ] \dots \\
	  	  &=B_n \dots B_n s^{a_n} [B_n \dots] \dots ,
	  	  \end{aligned}
  	  \en
  	 so that
 $a(n,q_n-1)+a_{n+1} = a_n$.  So
either $a(n,q_n-1)=0$ or  $a_{n+1} < a_n$.
Since we can have $a_{n+1}<a_n$ for only finitely many $n$,
 it follows that for all $n$ sufficiently large $a(n,q_n-1)=0$ and $a_{n+1}=a_n$.

(2) We showed that for large enough $n$, $n \geq N$,
\be\label{eq:rank1per}
B_{n+1} =B_n s^a B_n s^a B_n \dots s^a B_n = (B_n s^a)^{m_n} B_n.
\en
If also $n \geq N > k$, considering the $k$-coding of $x_{\min}$ shows that the blocks $B_{n+1}^{(k)}$ satisfy the same recursion, (\ref{eq:rank1per}), with the added superscripts of $(k)$.
	(For $ 0 \leq j < \dim(n+1,1)$, the paths $T^j x_{\min}$ are the same in both cases, just assigned different symbols, so as we increase $n$ the blocks concatenate in the same way.)
	Thus
	\be
	B_{n+2}^{(k)}  =(B_{n+1}^{(k)} s^a)^{m_{n+1}} B_{n+1}^{(k)} =
	[(B_{n}^{(k)} s^a)^{m_n} B_{n}^{(k)} s^a]^{m_{n+1}} \,  [(B_n^{(k)} s^a)^{m_n} B_n^{(k)}],
		\en
		showing eventually that the $k$-coding of $x_{\min}$ is $[B_n^{(k)} s^a]^\infty$.

(3) Denote by $\mu$ the unique nonatomic invariant Borel probability measure on the rank one system $(X,T)$
(which is supported on $X \setminus \{x_s\}$).
If every $k$-coding is periodic, the measure-theoretic factors that they determine are permutations of finitely many points.
Since the partitions $\alpha_k$ of $X$ according to the first $k$ edges generate the full sigma-algebra of $X$, the inverse limit of these periodic systems, which is an odometer, is measure-theoretically isomorphic to the full system $(X,T, \mu)$.

(4) In this case ($a=0$), the system consists of the isolated path $x_s$ where $T$ is fixed together with
either an odometer or a finite periodic orbit. The former occurs if and only if there are infinitely many levels for which $|r^{-1}(v(n,1))|\geq 2$. By the assumption that the blocks are always growing there are an infinite number of paths in $X$ and therefore the case when there are only finitely many $n$ for which $|r^{-1}(v(n,1))|\geq 2$ {does not occur}.

\end{proof}


 \section{Periodic codings of general Bratteli-Vershik systems}

In this section we consider more general Bratteli-Vershik systems, still satisfying \tb{the conditions (C\ref{cond:cond1})--(C\ref{cond:cond4}) in Section \ref{sec:setup}.}
We begin with the following definition that will help us find conditions for $k$-codings to be periodic and for the systems to be isomorphic or topologically conjugate to odometers, generalizing results in \cite{FPS2017}.
As mentioned in the Introduction and discussed more fully below, this definition generalizes the idea of {\em uniformly ordered} in \cite{FPS2017} to a sort of incomplete periodicity.

\begin{definition}\label{def:semi}
Given $k >0$ and $n \geq k$, suppose there exist $c_n\geq 0$ and a shortest nonempty block $U_n\in A_k^{*}$ for which neither the first nor last symbol of $U_n$ is $s_k$ \tb{ and} such that for every $j=1,\dots,K_n$ there exist $t(n,j)>0$ and $0 \leq l(n,j) \leq c_n$ such that $B^{(k)}(n, j) = (U_ns_k^{c_n})^{t(n,j)}(U_n)s_k^{l(n,j)}$.
We then say that level $n$ is {\em  semi $k$-periodic}.
\end{definition}
\begin{remark}
An alternative way to think of this definition is that one could code vertices at level $n$ by the vertices at level $k$. Then the vertex {coding} would {include} a finite word, $V_n$, of vertices at level $k$, which when expanded to the corresponding $k$-coding, gives $U_n$. Then semi $k$-periodic is equivalent to the coding by vertices at level $k$ of $B(n,j)$ \tb{being $(V_nv(k,K_k+1)^{c_n})^{t(n,j)}(V_n)v(k,K_k+1)^{l(n,j)}$.}
\end{remark}
\begin{example}
For example, consider the recursion given by:
\be
\begin{aligned}
K_1=2 &\hskip .2in B(1,1)=0\\
      &\hskip .2in B(1,2)=1\\
K_2=3 &\hskip .2in B(2,1)=B(1,1)sB(1,2)\\
&\hskip .2inB(2,2)=B(1,1)sB(1,2)\\
&\hskip .2inB(2,3)=B(1,2)sB(1,1)\\
K_3=2 &\hskip .2in B(3,1)=B(2,1)sB(2,3)s^2B(2,2)sB(2,3)\\
&\hskip .2in       B(3,2)=B(2,2)sB(2,3)s^2B(2,1)sB(2,3)s
\end{aligned}
\en
We would then say that level 3 is semi 1-periodic with $U_3=0s1s1s0$ and $B(3,1)=(U_3s^2)U_3$ and $B(3,2)=(U_3s^2)U_3s$.  However, level 2 is not semi 1-periodic since $B(2,1)= 0s1$ and  $B(2,3)= 1s0$. In addition, level 3 is not semi 2-periodic.
\end{example}
\begin{remark}
	Note that because of the one-block map $\pi_{k,k'}: A_k^* \to A_{k'}^*$, if a level is semi $k$-periodic, then it is also semi $k'$-periodic for all $k' \leq k$.
	\end{remark}
	
Also note that if you are coding by the first $k$ edges and level $k+1$ is semi $k$-periodic, the condition becomes a more restrictive local condition.  Specifically:
\begin{proposition}\label{prop:decomp}
   If level $k+1$ is semi $k$-periodic, then the corresponding block $U_{k+1}$ uniquely decomposes into the blocks $B^{(k)}(k,j)$ for $j=1,\dots,K_{k}+1$, and level $k+1$ is pseudo-complete (see Definition \ref{def:pseudocomplete}).  \label{prop:Decomposition}
\end{proposition}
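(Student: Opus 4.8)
The plan is to exploit the special structure of the level-$k$ basic blocks $B^{(k)}(k,1),\dots,B^{(k)}(k,K_k+1)$. For $j\le K_k$ the block $B^{(k)}(k,j)$ is the $k$-coding of the tower over $v(k,j)$, so it reads off, each exactly once, the symbols naming the $\dim(k,j)$ distinct root-to-$v(k,j)$ segments, while $B^{(k)}(k,K_k+1)=s_k$. Since every non-spacer symbol of $A_k$ names a unique root-to-level-$k$ segment and hence a unique level-$k$ vertex, each symbol of $A_k$ occurs in exactly one of these blocks, and each non-spacer symbol occurs there exactly once. First I would record the consequence that these blocks form a uniquely decodable code with a \emph{local} parsing rule: in any word that is a concatenation of the $B^{(k)}(k,j)$, a position begins a new block if and only if its symbol is $s_k$ or is the leading symbol of the (unique) block to which that symbol belongs. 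Uniqueness of the decomposition follows because two distinct parses would first disagree at a position whose symbol would then have to be the leading symbol of two different blocks, which is impossible.

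Next, by the recursion (\ref{eq:recursion}) with $n=k$, the block $B^{(k)}(k+1,j)$ is literally a concatenation of the level-$k$ blocks (each spacer run $s_k^{a}$ being $a$ copies of $B^{(k)}(k,K_k+1)$), so it carries this unique parse; moreover its first symbol is the leading symbol of $B^{(k)}(k,g(k,j,0))$, which is non-spacer by (C\ref{cond:cond3}), so position $0$ begins a block. To prove the decomposition claim (a) I would align the first copy of $U_{k+1}$ in $B^{(k)}(k+1,j)=(U_{k+1}s_k^{c_{k+1}})^{t(k+1,j)}(U_{k+1})s_k^{l(k+1,j)}$, using that $t(k+1,j)\ge 1$. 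The symbol at position $|U_{k+1}|$ is then either $s_k$ (when $c_{k+1}\ge 1$) or the first symbol of the next copy of $U_{k+1}$ (when $c_{k+1}=0$); in both cases the local rule forces a block boundary there, while position $0$ is already a boundary. Hence the prefix $U_{k+1}$ spans a whole number of level-$k$ blocks, and by the parsing lemma this decomposition is unique.

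For pseudo-completeness (b) I would observe that, since $B^{(k)}(k+1,j)$ consists of copies of $U_{k+1}$ separated by spacer runs, the set $S$ of non-spacer level-$k$ vertices whose blocks appear in $B^{(k)}(k+1,j)$ is exactly the set of such vertices appearing in $U_{k+1}$, and is therefore the \emph{same} set for every $j\le K_{k+1}$; each such block does appear because $t(k+1,j)\ge1$. An explicit appearance of $B^{(k)}(k,i)$ in $B^{(k)}(k+1,j)$ is precisely an edge $v(k,i)\to v(k+1,j)$, so it remains to show $S=\{1,\dots,K_k\}$. This is where I would invoke (C\ref{cond:cond2}): for $i\le K_k$ the vertex $v(k,i)$ cannot be the source of the single edge into the spacer vertex $v(k+1,K_{k+1}+1)$, so every edge out of $v(k,i)$ — and there is at least one, since every vertex is a source — lands on a non-spacer vertex $v(k+1,j)$ with $j\le K_{k+1}$; thus $i\in S$. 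Hence $S=\{1,\dots,K_k\}$, every pair $v(k,i),v(k+1,j)$ with $i\le K_k,\,j\le K_{k+1}$ is joined by an edge, and level $k+1$ is pseudo-complete.

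The main obstacle is the parsing step: one must justify carefully that the level-$k$ blocks admit a genuinely local, unique decoding, so that the period of the semi-periodic word transfers to the block boundaries, and then handle the $c_{k+1}=0$ case — where no separating spacers are present — by the leading-symbol argument rather than by the appearance of an $s_k$. Once the local parsing rule and the single consequence of (C\ref{cond:cond2}) are in hand, both conclusions follow quickly.
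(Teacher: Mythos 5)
Your proposal is correct and follows essentially the same route as the paper: the unique decomposition of $U_{k+1}$ comes from the level-$k$ blocks having pairwise disjoint alphabets with each non-spacer symbol occurring exactly once (your local parsing rule just makes explicit the paper's one-line deduction), and pseudo-completeness comes from every non-spacer level-$k$ vertex having at least one edge into a non-spacer level-$(k+1)$ vertex, so that its block appears in the common block $U_{k+1}$ and hence explicitly in every $B^{(k)}(k+1,j)$, which corresponds to an edge. The paper leaves the parsing details and the role of (C\ref{cond:cond2}) implicit; your write-up simply fills them in.
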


\begin{proof}
 When coding by the first $k$ edges, each $B^{(k)}(k,j)$ consists of $\dim(k,j)$ distinct letters, and if $j_1\neq j_2$, then $B^{(k)}(k,j_1)$ and $B^{(k)}(k,j_2)$ have no symbols in common.
Therefore, if level $k+1$ is semi $k$-periodic, the corresponding block $U_{k+1}$ uniquely decomposes into the blocks $B^{(k)}(k,j)$ for $j=1,\dots,K_{k}+1$.

{Since each basic block $B^{(k)}(k+1,j)$ contains the same block $U_{k+1}$ and each vertex $w \in \mathcal V_{k} \setminus \{s_{k}\}$ is the source of an edge to some vertex $v \in \mathcal V_{k+1}\setminus \{s_{k+1}\}$, it follows that for all $w \in \mathcal V_k \setminus \{s_k\}$ and all $ v' \in \mathcal V_{k+1}\setminus \{s_{k+1}\}$ there is an edge from $w$ to $v'$.}
Hence level $k+1$ is also pseudo-complete.
\end{proof}

In Example \ref{ex:semikperiodic}, level $k+1$ is semi $k$-periodic.
\begin{example}
\label{ex:semikperiodic}
Consider a system with $K_k=2$ and given by the following recursion.
\be
\begin{aligned}
B^{(k)}(k+1,1)&=B^{(k)}(k,2)s^3B^{(k)}(k,1)s^4B^{(k)}(k,2)s^3B^{(k)}(k,1)s^4B^{(k)}(k,2)s^3B^{(k)}(k,1)s^2\\
B^{(k)}(k+1,2)&=B^{(k)}(k,2)s^3B^{(k)}(k,1)s^4B^{(k)}(k,2)s^3B^{(k)}(k,1)s^2
\end{aligned}
\en
In the corresponding Bratteli diagram if $e$ and $e'$ are two distinct edges connecting level $k$ and $k+1$ with $r(e),r(e')\neq v(k+1,K_{k+1}+1)$ and $\xi(e)=\xi(e') \mod 9$, then $s(e)=s(e')$.
\end{example}
\begin{remark}
The semi $k$-periodic property at level $k+1$ is a generalization of \emph{uniformly ordered at level k+1} that appears in \cite{FPS2017}. The latter is equivalent to the existence of a block $W_{k+1} \in A_k^*$ such that for each $j=1,\dots,K_{k+1}$, $B^{(k)}(k+1,j) = (W_{k+1})^{t(k+1,j)}$ for some $t(k+1,j)>0$ (each basic block at level $k+1$ is periodic with the same repeated subblock).
Uniformly ordered at level $k+1$ was shown in \cite{FPS2017} to be sufficient for the $k$-coding of a transitive path to be periodic. So if infinitely many levels are uniformly ordered, then the resulting system is topologically conjugate to an odometer.
When $l(k+1,j)=c_{k+1}$ for all $j=1,\dots,K_{k+1}$, the semi $k$-periodic property at level $k+1$ is equivalent to uniformly ordered at level $k+1$ provided that the basic block $B^{(k)}(k+1,K_{k+1}+1)=s_k$ corresponding to vertex $v(k+1,K_{k+1}+1)$ is disregarded. Otherwise, it is slightly weaker than uniformly ordered in that $W_{k+1} = U_{k+1}s^{c_{k+1}}$, and  $B^{(k)}(k+1,j)$ is allowed to stop short before completely running through the last set of spacers in $W_{k+1}$, leaving a deficit.
We will see in Theorem \ref{thm:general} and its corollary that semi $k$-periodicity is a necessary condition for the $k$-coding of a transitive path to be periodic.
\end{remark}

The results of this section assume the existence of a transitive path. The following lemma gives a sufficient condition for existence of a forward transitive path.
\begin{lemma}\label{lemma:transpath}
   Assume that the diagram and associated recursive symbolic construction satisfy \tb{the conditions (C\ref{cond:cond1})--(C\ref{cond:cond4})}
   specified in Section \ref{sec:setup} and that there are infinitely many $n \in \N$ such that level $(n+1)$ is semi $n$-periodic. Further, suppose that there are infinitely many $n$ for which $|s^{-1}v(n,K_n+1)|>1$. Then $X$ has a minimal forward transitive path.
\end{lemma}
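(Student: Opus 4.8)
The plan is to exhibit one minimal path $x^*$—a path all of whose edges are minimal into their ranges, chosen to avoid the spacer vertices—and to show that its forward orbit is dense. Because every nonempty cylinder set is of the form $E(e_0\cdots e_{m-1})$ for some $m$, is nonempty since every vertex is a source, and corresponds to exactly one letter of $A_m$ (the segment it is determined by), the forward orbit $\{T^p x^*\}_{p\ge 0}$ meets it iff that letter occurs in $\phi_m(x^*)$. Thus forward transitivity of $x^*$ is equivalent to the statement that for every $m\ge 1$ every letter of $A_m$ occurs in $\phi_m(x^*)$, and this is what I would prove.

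First I would build $x^*$. Keeping only the minimal edge into each vertex, (C\ref{cond:cond3}) ensures that the minimal edge into any non-spacer vertex $v(n,j)$, $j\le K_n$, has a non-spacer source; joining each non-spacer vertex to the source of its minimal incoming edge therefore organizes the non-spacer vertices into a rooted tree all of whose chains reach the root. By (C\ref{cond:cond1}) this tree has a vertex at every level, so it is infinite and locally finite, and K\"onig's lemma provides an infinite branch: a minimal path $x^*$ running through non-spacer vertices $v(n,j_n)$, $j_n\le K_n$ (in particular $x^*\ne x_s$). Since $x^*$ is minimal it lies in the base $C(n,j_n)$ of the tower over $v(n,j_n)$, so $B^{(m)}(n,j_n)$ is a prefix of $\phi_m(x^*)$ for every $n\ge m$; as $j_n\le K_n$, condition (C\ref{cond:cond4}) gives $\dim(n,j_n)=|B^{(m)}(n,j_n)|\to\infty$, so the forward orbit is infinite and $\phi_m(x^*)$ is a genuine one-sided sequence.

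Next I would fix $m$ and take $n\ge m$ with level $n+1$ semi $n$-periodic. By Proposition \ref{prop:decomp} level $n+1$ is then pseudo-complete, so every non-spacer vertex at level $n$ sends an edge to the non-spacer vertex $v(n+1,j_{n+1})$; hence the tower over $v(n+1,j_{n+1})$ runs through every segment to every non-spacer level-$n$ vertex, and every non-spacer letter of $A_n$ occurs in the prefix $B^{(n)}(n+1,j_{n+1})$ of $\phi_n(x^*)$. Since the one-block map $\pi_{n,m}$ carries $\phi_n(x^*)$ to $\phi_m(x^*)$ letter by letter, it then suffices to show that each letter $a\in A_m$ equals $\pi_{n,m}(\tilde a)$ for some non-spacer letter $\tilde a\in A_n$, i.e.\ a segment to a non-spacer level-$n$ vertex whose first $m$ edges spell $a$. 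For non-spacer $a$ this is automatic, because by (C\ref{cond:cond2}) a non-spacer level-$m$ vertex can be extended forward only through non-spacer vertices.

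The hard part, and the step that forces the second hypothesis, is the single remaining letter $s_m$: I must show the forward orbit meets the spacer cylinder $C(m,K_m+1)$. Some such hypothesis is genuinely necessary, since if $x_s$ were isolated then $\{x_s\}$ would be a cylinder met by no forward orbit other than that of $x_s$ itself, precluding any forward transitive path. Using the hypothesis I would pick a level $\ell\ge m$ with $|s^{-1}v(\ell,K_\ell+1)|>1$; by (C\ref{cond:cond2}) at most one edge out of $v(\ell,K_\ell+1)$ reaches the level-$(\ell+1)$ spacer vertex, so some edge leaves $v(\ell,K_\ell+1)$ to a non-spacer vertex, whose forward continuations again stay non-spacer. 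Splicing the spacer chain $v(m,K_m+1)\to\cdots\to v(\ell,K_\ell+1)$ onto this edge and continuing forward produces, for every large $n$, a segment to a non-spacer level-$n$ vertex whose first $m$ edges form the spacer segment; choosing such an $n$ with level $n+1$ also semi $n$-periodic gives a non-spacer $\tilde a\in A_n$ with $\pi_{n,m}(\tilde a)=s_m$, so $s_m$ occurs in $\phi_m(x^*)$. Combining the cases, every letter of every $A_m$ occurs in $\phi_m(x^*)$, so $x^*$ is the desired minimal forward transitive path.
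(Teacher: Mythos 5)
Your proof is correct and takes essentially the same route as the paper's: you build a minimal path avoiding the spacer vertices (your K\"onig's-lemma tree argument is the paper's compactness argument in different clothing), invoke Proposition \ref{prop:decomp} to get pseudo-completeness at the infinitely many semi-periodic levels so that every non-spacer vertex feeds into the towers over that path, and treat the spacer cylinder by escaping the spacer chain through a branching spacer vertex, exactly as the paper does. The remaining differences are purely presentational (letters occurring in $\phi_m(x^*)$ versus the forward orbit meeting cylinder sets, and lifting a segment forward to a pseudo-complete level versus refining an arbitrary target path down to one).
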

\begin{proof}
Since it is assumed that no basic blocks begin with spacers, it is possible to construct a sequence of paths $(x(m))$ such that for each $m\in \mathbb{N}$, $x(m)$ is minimal into level $m$ and does not pass through $v(n,K_n+1)$ for any $n$ in $\mathbb{N}$. There is a convergent subsequence $(x(m_k))$ which converges to a path $x$. Then $x$ is minimal and does not pass through $v(n,K_n+1)$ for any $n\in \mathbb{N}$.

 When level $n+1$ is semi $n$-periodic it is pseudo-complete, and hence for infinitely many $n$ and any $1\leq i \leq K_n,1\leq j \leq K_{n+1}$, there is an edge from $v(n,i)$ to $v(n+1,j)$. For each $n$ denote by $v_n$ the vertex through which $x$ passes at level $n$.
  Let $y$ be an arbitrary path in $X$ and $N \in \mathbb N$.
  Choose $m > N$ such that level $m+1$ is semi $m$-periodic and hence pseudo-complete.
  Let $y_m$ be the $m$'th edge in the path $y$, with $r(y_m)=v(m,j_y)$.

If $j_y\neq K_{m}+1$, then by pseudo-completeness there is an edge from $v(m,j_y)$ to $v_{m+1}$.
  Since $x$ is minimal into $v_{m+1}$ there is a path in the forward orbit of $x$ which agrees with $y$ down to level $m$ and hence to level $N$.

  If $j_y=K_{m}+1$, there exists an $N_2\geq m$ such that there is an edge from $v(N_2,K_{N_2}+1)$ to a vertex $v(N_2+1,j')$ where $j'\neq K_{N_2+1}+1$.
   Find $m_2 > N_2$ such that level $m_2+1$ is semi $m_2$-periodic, hence pseudo-complete.
   There is a segment from $v(N_2+1,j')$ to some vertex $v(m_2,r), 1\leq r \leq K_{m_2}$,
   and an edge from $v(m_2,r)$ to $v_{m_2+1}$.
   Then $x$ passes through $v_{m_2+1}$ and there is a segment connecting $v_{m_2+1}$ to $v(N_2,K_{N_2}+1)$ and hence to $v(N,K_{N}+1)$. Since $x$ is minimal into $v_{m_2+1}$, there is a path in the forward orbit of $x$ which agrees with $y$ down to level $N$.
   Hence $x$ is a forward transitive path for the system.
\end{proof}

\begin{remark} \tb{Assume the hypotheses of Lemma \ref{lemma:transpath}.} If we restrict to systems for which $X'=X\setminus x_s$ has a unique minimal path, then this path is transitive. Conversely, if $X$ has a transitive path, then there are necessarily infinitely many $n$ for which $|s^{-1}(v(n,K_n+1))|>1$.   Indeed, if there are only finitely many $n$ for which $|s^{-1}v(n,K_n+1)|>1$, then $x_s\in X$ is an isolated fixed path. However, we could exclude $x_s$ and the above argument would still apply to $X'=X \setminus \{x_s\}$ to show that $X'$ has a transitive path.
\end{remark}

 The following definition adds a condition to level $n$ being semi $k$-periodic.
   Recall that level $n >k$ is semi $k$-periodic if for each \tb{$j=1,\dots, K_n$} we have $B^{(k)}(n, j) = (U_ns^{c_n})^{t(n,j)}(U_n)s^{l(n,j)}$ for some finite block $U_n\in A_k^{*}$ and $l(n,j) \leq c_n$.

   \begin{definition}\label{def:locdeficit}
  We say
    that {\em level $n$ satisfies the local deficit condition with respect to level $k$}, abbreviated $LDC(n,k)$, if
     level $n$ is semi $k$-periodic and 
     for each $i=1, \dots, K_{n+1}$ and $j,j''=1,\dots,K_{n}$
    and explicit appearances of $B^{(k)}(n,j)$ and $B^{(k)}(n,j'')$ in any recursion
    \be
    B^{(k)}(n+1,i)=\dots B^{(k)}(n, j)s^{a(n,j)}B^{(k)}(n,j')\dots B^{(k)}(n,j'')s^{a}
    \en
     we have that the deficit $a(n,j)=c_n-l(n,j)$ and $a\leq a(n,j'')= c_n-l(n,j'')$.
   \end{definition}

\begin{remark}\label{rem:nottoomanyspacers}
If there exists an $N >k$ such that for all $n \geq N$
$LDC(n,k)$ is satisfied, then for all these $n$ we have semi $k$-periodicity, in other words the basic blocks at level $n$ are periodic with a deficit of spacers at the end.
Furthermore, before a basic block $B^{(k)}(n, j)$ at level $n$ is concatenated with another in the construction of a basic block at level $n+1$,  just enough spacers are added at the end of $B^{(k)}(n, j)$ to complete the period.  We shall show in Theorem \ref{thm:general} that this implies that the $k$-coding of the minimal orbit is periodic.
\end{remark}

\begin{remark}
   The condition $LDC(n,k)$ can be seen on an ordered Bratteli diagram in the following manner. Let $k<n$ and suppose $LDC(n,k)$ is satisfied.
   Let $v(n,j)$ and $v(n+1,j')$ be two vertices such that there is an edge $e$ connecting them. Further assume that there exists another edge $e'$ with $r(e')=v(n+1,j')$, $s(e')\neq K_{n}+1$, and $\xi(e)<\xi(e')$.
   In other words, $B(n,j)$ is not the last basic block appearing explicitly in the decomposition of $B(n+1,j')$. 
   Since level $n$ is semi $k$-periodic, \tb{$B^{(k)}(n,j)=(U_ns^{c_n})^{t(n,j)}(U_n)s^{l(n,j)}$.} For ease of notation, let $m= c_n -l(n,j)$.
   Then there are $m+1$ edges, $e_1, e_2,\dots, e_{m+1}$, into vertex $v(n+1,j')$ for which $\xi(e_i)=\xi(e)+i$, $s(e_1)=s(e_2)=\dots s(e_m)=K_n+1$ and $s(e_{m+1})\neq K_n+1$; equivalently, in the partial edge ordering $e$ is followed by exactly $c_n-l(n,j)$ edges connecting $v(n+1,j')$ to the spacer vertex.
   Since the partial edge ordering is fixed, we know that if $LDC(n,k)$ is satisfied for multiple values of $k$, the deficit is the same for each $k$.
\end{remark}

Proposition \ref{prop:LocalDeficit} equates satisfying $LDC(n,k)$ to semi $k$-periodicity at both levels $n$ and $n+1$ plus a condition on the corresponding blocks $U_n$, $U_{n+1}$ and the numbers of consecutive spacers at those levels. It is beneficial to have both formulations in the proofs of the following theorems.

\begin{proposition}
 Given $n>k$, if level $n$ and level $n+1$ are both semi $k$-periodic with $U_{n+1}=U_n$, then $c_{n+1} = c_n$ and $LDC(n,k)$ is satisfied. Conversely, if $LDC(n,k)$ is satisfied, then levels $n$ and $n+1$ are semi $k$-periodic with $U_n=U_{n+1}$ and $c_n=c_{n+1}$.\label{prop:LocalDeficit}
\end{proposition}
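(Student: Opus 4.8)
The plan is to prove both implications by comparing, for each target vertex $v(n+1,i)$, the two ways of writing the basic block $B^{(k)}(n+1,i)$: the semi $k$-periodic form coming from level $n+1$, and the concatenation $B^{(k)}(n+1,i)=B^{(k)}(n,j_0)s^{a_0}\cdots B^{(k)}(n,j_{q-1})s^{a_{q-1}}$ coming from the recursion (\ref{eq:recursion}), into which I substitute the level-$n$ forms $B^{(k)}(n,j_r)=(U_ns^{c_n})^{t_r}U_ns^{l_r}$ (here $j_r=g(n,i,r)$, $a_r=a(n,i,r)$, $t_r=t(n,j_r)$, $l_r=l(n,j_r)$, $q=q(n,i)$). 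The engine for the forward implication is the following uniqueness statement: since $U:=U_n=U_{n+1}$ neither begins nor ends with $s$, a word admits at most one factorization as a concatenation of copies of $U$ separated by (possibly empty) maximal runs of $s$. I would prove this by a left-to-right induction: two such factorizations agree at position $0$; if they agree up to the end of a common copy of $U$, then the first subsequent non-$s$ symbol lies in a copy of $U$ in each factorization, and since copies of $U$ do not overlap and $U$ starts with a non-$s$ symbol, that copy must begin exactly after the intervening maximal $s$-run, so the two factorizations continue to agree.

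For the forward implication I apply this lemma to $B^{(k)}(n+1,i)$. The level-$(n+1)$ form exhibits every interior gap between consecutive copies of $U$ as $c_{n+1}$ (there is at least one, since $t(n+1,i)>0$) and the trailing gap as $l(n+1,i)\le c_{n+1}$. The substituted recursion exhibits the interior gaps inside a single $B^{(k)}(n,j_r)$ as $c_n$ (again at least one, since $t_r>0$), the gap at the junction of $B^{(k)}(n,j_r)$ and $B^{(k)}(n,j_{r+1})$ as $l_r+a_r$, and the final trailing gap as $l_{q-1}+a_{q-1}$. Matching the two lists of gaps forces $c_n=c_{n+1}$; then $a_r=c_n-l(n,j_r)$ at each non-final junction, which is exactly the local deficit equality and shows that $a(n,j)$ depends only on $j$; and finally $l_{q-1}+a_{q-1}=l(n+1,i)\le c_n$, so the trailing spacer count satisfies $a_{q-1}\le c_n-l(n,j_{q-1})$. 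This is precisely $LDC(n,k)$.

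For the converse, $LDC(n,k)$ already includes semi $k$-periodicity of level $n$. Substituting the deficit equalities into the recursion, each non-final factor collapses to a clean power, $B^{(k)}(n,j_r)s^{a_r}=(U_ns^{c_n})^{t_r}U_ns^{c_n}=(U_ns^{c_n})^{t_r+1}$, while the final factor becomes $(U_ns^{c_n})^{t_{q-1}}U_ns^{l_{q-1}+a_{q-1}}$ with $l_{q-1}+a_{q-1}\le c_n$. Concatenating gives $B^{(k)}(n+1,i)=(U_ns^{c_n})^{t(n+1,i)}U_ns^{l(n+1,i)}$, where $t(n+1,i)=\sum_{r<q-1}(t_r+1)+t_{q-1}>0$ and $l(n+1,i)=l_{q-1}+a_{q-1}\le c_n$. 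Thus $U_n$, with the same spacer count $c_n$, is a valid semi $k$-periodic generator for every level-$(n+1)$ block, and in particular level $n+1$ is semi $k$-periodic with $|U_{n+1}|\le|U_n|$.

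What remains, and what I expect to be the main obstacle, is to upgrade this to $U_{n+1}=U_n$ and $c_{n+1}=c_n$, that is, to show $U_n$ is the \emph{shortest} valid generator at level $n+1$. My plan is first to prove that $V_n:=U_ns^{c_n}$ is primitive: if $V_n=Z^f$ with $f\ge2$, then writing $Z=Ys^d$ with $Y$ ending in a non-$s$ symbol shows $c_n=d$ and $U_n=(Ys^d)^{f-1}Y$, so every level-$n$ block equals $(Ys^d)^{t'}Ys^{l}$ with $l\le d$, making the strictly shorter $Y$ a valid generator at level $n$ and contradicting the minimality of $U_n$ (the case $c_n=0$ is immediate). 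Since $l(n+1,i)\le c_n$, each level-$(n+1)$ block has the form $V_n^{\,t}$ followed by a prefix of $V_n$ with $t\ge1$; whenever some block contains $V_n^2$, primitivity makes its minimal period exactly $|V_n|$, and comparing with the period $|V'|$ of a hypothetical shorter generator $V'=U_{n+1}s^{c_{n+1}}$ via the Fine and Wilf argument already used in the proof of Theorem \ref{thm:rankone} forces $|V'|$ to be a multiple of $|V_n|$, hence $U_{n+1}$ to be a power of $V_n$, so $|U_{n+1}|\ge|U_n|$ with equality exactly when $U_{n+1}=U_n$ and $c_{n+1}=c_n$. The delicate point is the degenerate case in which every level-$(n+1)$ block equals $V_nU_ns^{l}$ with $l<c_n$ and contains no square $V_n^2$; there I would instead compare numbers of non-$s$ symbols, noting that any valid generator must read off $U_n$ as the unique non-$s$-ending prefix carrying exactly $|U_n|$ non-spacer symbols, which again pins down $U_{n+1}=U_n$ and $c_{n+1}=c_n$.
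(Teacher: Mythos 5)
Your core argument coincides with the paper's proof. For the forward direction the paper also expands $B^{(k)}(n+1,j')$ both by semi $k$-periodicity at level $n+1$ and by substituting the level-$n$ forms into the recursion, and deduces $c_{n+1}=c_n$, the interior equalities $l(n,j_i)+a_i=c_n$, and the terminal inequality, all from the fact that $U_n$ neither begins nor ends with $s$; your uniqueness-of-factorization lemma is a clean way of making that matching of spacer-run lists precise. Your converse computation (collapsing each non-final factor to $(U_ns^{c_n})^{t_r+1}$) is also exactly the paper's. The genuine difference is that you recognize that Definition \ref{def:semi} makes $U_{n+1}$ the \emph{shortest} admissible block, so the converse requires showing that no block shorter than $U_n$ can generate level $n+1$; the paper dismisses this with an unproved parenthetical (``no shorter block than $U_n$ can serve as $U_{n+1}$''), while you attempt an actual proof. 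Your primitivity lemma for $V_n=U_ns^{c_n}$ is correct and is the right tool for this.

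However, your completion of that step has a gap. First, in the non-degenerate case the word-level Fine--Wilf comparison needs the length threshold $|W|\ge |V_n|+|V'|-\gcd(|V_n|,|V'|)$, which is not automatic: minimality bounds $|U_{n+1}|$ but not $c_{n+1}$, so $|V'|$ could a priori exceed $|V_n|$. Second, and more seriously, your degenerate-case remark only rules out an alternative generator occurring with exponent $t'=1$: writing $Y=V_nU_n=(U's^{c'})^{t'}U'$ and counting non-spacer symbols gives $2\nu(U_n)=(t'+1)\nu(U')$, and only when $t'=1$ does this force $\nu(U')=\nu(U_n)$ and hence $U'=U_n$; when, say, $3\mid 2\nu(U_n)$, the identity permits $t'=2$ with $\nu(U')=\tfrac{2}{3}\nu(U_n)$, and counting alone does not exclude it. Both problems are repaired at once by running Fine--Wilf on the sequence of pairs (non-spacer symbol, length of the following spacer run) rather than on the words themselves: if $Y=V_n^{T}U_n=(V')^{T'}U'$ with $T,T'\ge 1$ and $|U'|\le |U_n|$ (so $U'$ is a prefix of $U_n$ and $\nu(U')\le\nu(U_n)$), this pair sequence has length $(T+1)\nu(U_n)$, its first $(T+1)\nu(U_n)-1$ terms have periods $\nu(U_n)$ and $\nu(U')$, and that length is at least $\nu(U_n)+\nu(U')-\gcd(\nu(U_n),\nu(U'))$; Fine--Wilf then yields the period $\gcd(\nu(U_n),\nu(U'))$, and if this were smaller than $\nu(U_n)$ the first $\nu(U_n)$ pairs would exhibit $V_n$ as a proper power, contradicting your primitivity lemma. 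Hence $\nu(U')=\nu(U_n)$, forcing $U'=U_n$ (a prefix of $U_n$ not ending in $s$ and containing all of its non-spacer symbols must reach its final symbol) and then $c'=c_n$. With this substitution your argument is complete, and it establishes precisely the point that the paper leaves unjustified.
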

\begin{proof}
First, we assume that level $n$ and level $n+1$ are both semi $k$-periodic with $U_{n+1}=U_n$. Then
for every $j=1, \dots, K_n$,
\be
B^{(k)}(n,j)=(U_ns^{c_n})^{t(n,j)}U_ns^{l(n,j)},
\en
where $U_n$, $c_n$, $t(n,j)$, and $l(n,j)$ are as in Definition \ref{def:semi}.
Furthermore, for each $1\leq j'\leq K_{n+1}$ there are $m(j')\in \N$, $j_1,\dots , j_{m(j')}\leq K_n$ and $a_1,\dots , a_{m(j')}\geq 0$ such that
\be
\begin{aligned}
&B^{(k)}(n+1,j')=B^{(k)}(n,j_1)s^{a_1}B^{(k)}(n,j_2)s^{a_2}\dots B^{(k)}(n,j_{m(j')})s^{a_{m(j')}}\\
&=(U_ns^{c_n})^{t(n,j_1)}U_ns^{l(n,j_1)}s^{a_1}(U_ns^{c_n})^{t(n,j_2)}U_ns^{l(n,j_2)}s^{a_2}\dots  (U_ns^{c_n})^{t(n,j_{m(j')})}U_ns^{l(n,j_{m(j')})}s^{a_{m(j')}}.
\end{aligned}
\en
Also,
\be
B^{(k)}(n+1,j')=(U_{n+1}s^{c_{n+1}})^{t(n+1,j')}U_{n+1}s^{l(n+1,j')}.
\en
Since we also have $U_{n+1}=U_n$, and $U_n$ does not begin or end with $s$, we must have $c_{n+1}=c_n$.
Then for all $1\leq i<m(j')$, we have $l(n,j_i)+a_i=c_n$.
Furthermore, $l(n,j_{(m(j')})+a_{m(j')} = l(n+1,j') \leq c_{n+1}=c_n$ implies that $a_{m(j')}\leq c_n-l(n,j_{m(j')})$, as required. Hence $LDC(n,k)$ is satisfied.

Now, we assume that $LDC(n,k)$ is satisfied. Then level $n$ is semi $k$-periodic, and for $1\leq j\leq K_n$ we have
\be
B^{(k)}(n,j)=(U_ns^{c_n})^{t(n,j)}U_ns^{l(n,j)}.
\en
Let $j' \in \{1,\dots,K_{n+1}\}$. Then
\be
\begin{aligned}
B^{(k)}&(n+1,j')=B^{(k)}(n,j_1)s^{c_n-l(n,j_1)}B^{(k)}(n,j_2)s^{c_n-l(n,j_2)}\dots B^{(k)}(n,j_{m(j')})s^{a_{m(j')}}\\
&=(U_ns^{c_n})^{t(n,j_1)}U_ns^{c_n}(U_ns^{c_n})^{t(n,j_2)}U_ns^{c_n}\dots (U_ns^{c_n})^{t(n,j_{m(j')})}U_ns^{l(n,j_{m(j')})}s^{a_{m(j')}}\\
&=(U_ns^{c_n})^{m(j')-1+\sum_{i=1}^{m(j')}t(n,j_i)}U_ns^{l(n,j_{m(j')})+a_{m(j')}}.
\end{aligned}
\en
In order to fulfill the requirements of Definition \ref{def:semi}, we let $U_{n+1} = U_n$ and $c_{n+1}=c_n$, \tb{noting that we {\em must} have $c_{n+1}=c_n$ and no shorter block than $U_n$ can serve as $U_{n+1}$.}
{Then}
\be
B^{(k)}(n+1,j')=(U_{n+1}s^{c_{n+1}})^{t(n+1,j')}U_{n+1}s^{l(n+1,j')},
\en
where
\be
\begin{gathered}
 t(n+1,j')={m(j')-1+\sum_{i=1}^{m(j')}t(n,j_i)}, \\
 \text{and } l(n+1,j')=l(n,j_{m(j')})+a_{m(j')}.
 \end{gathered}
 \en
 Since $a_{m(j')}\leq c_n-l(n,j_{m(j')})$, we also have that $l(n+1,j')\leq c_n=c_{n+1}$. Then, since $j'$ was arbitrary, level $n+1$ is semi $k$-periodic, $U_{n+1}=U_n$ and $c_{n+1} = c_n$.
\end{proof}

\begin{remark}\label{rem:lotsofspacers}
Note that if there exists  $N>k$ such that $LDC(n,k)$ is satisfied at all levels $n \geq N$, then we have semi $k$-periodicity at each level $n \geq N$ and, by Proposition \ref{prop:LocalDeficit}, $c_n$ = $c_N$ and $U_n = U_N$.
Hence, for any basic block
\be
B^{(k)}(n+1,i)=\dots B^{(k)}(n, j)s^{a(n,j)}B^{(k)}(n,j')\dots B^{(k)}(n,j'')s^{a}, i \leq K_{n+1},
\en
  the total number of spacers $a + l(n,j'')$ that appear at the end of this block (i.e. following the last non spacer) cannot exceed $c_n = c_N$. So
  {if $LDC(n,k)$ is satisfied for all large enough $n$,} we can only add spacers at the ends of basic blocks for finitely many levels and therefore arbitrarily long strings of spacers do not appear in the $k$-coding.
\end{remark}

We are now ready to prove that if there is a $k$ such that the $k$-coding of a minimal forward transitive path is periodic then all basic blocks of sufficient length must share this periodic structure. More precisely, from some point on, all levels must satisfy the local deficit condition with respect to level $k$ (which includes that each level is semi $k$-periodic).
	By telescoping from the root to level $k$, we may assume that $k=1$. The plan of the proof is to note first that the uniqueness of the minimal path implies that eventually all basic blocks must begin with the fundamental repeating block $P$.
	Since blocks need to concatenate in such a way as to preserve periodicity, and the fundamental repeating block $P$ cannot overlap itself, all basic blocks must either end with a complete $P$ or else be able to make up the end of $P$ at the next level.
The only way this can be accomplished is by adding the symbol $s$ exactly the right number of times, showing that from some level on, $LDC(n,1)$ is satisfied.
We show conversely that if $LDC(n,1)$ is satisfied for all sufficiently large $n$, then each {long enough} basic block must be periodic up to a deficit. Therefore any minimal path will have a 1-coding that begins with an arbitrarily long string of $P$ and hence is periodic.

\begin{theorem}\label{thm:general}
Assume that the diagram and associated recursive symbolic construction satisfy \tb{the conditions (C\ref{cond:cond1})--(C\ref{cond:cond4})} specified in Section 2 and there is a unique minimal path $x\in X'=X \setminus\{x_s\}$ that is forward transitive. Then the $1$-coding  $\omega = \phi_1(x)$ is periodic if and only if there is an $N$ such that every level $n \geq N$ satisfies the local deficit condition with respect to level $1$ ($LDC(n,1)$ holds). \end{theorem}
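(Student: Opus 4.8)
The two implications are of unequal difficulty, so the plan is to dispose of the easy direction first and then concentrate on the converse.

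For the implication that $LDC(n,1)$ for all large $n$ forces periodicity of $\omega$, I would argue as follows. By Proposition~\ref{prop:LocalDeficit} together with Remark~\ref{rem:lotsofspacers}, if $LDC(n,1)$ holds for all $n\ge N$ then there are a fixed block $U$ and a fixed $c\ge 0$ with $U_n=U$ and $c_n=c$ for all such $n$, so that $B(n,j)=(Us^c)^{t(n,j)}Us^{l(n,j)}$ for every $j\le K_n$. Since $x\ne x_s$, conditions (C\ref{cond:cond2}) and (C\ref{cond:cond3}) force $x$ to pass through a non-spacer vertex $v(n,j_n)$ at every level, so $\omega=\phi_1(x)$ begins with $B(n,j_n)$ for each $n$; as $|B(n,j_n)|\to\infty$ by (C\ref{cond:cond4}) while $|Us^c|$ is fixed, we get $t(n,j_n)\to\infty$, whence $(Us^c)^m$ is a prefix of $\omega$ for every $m$ and $\omega=(Us^c)^\infty$ is periodic.

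For the converse, suppose $\omega=P^\infty$ with $P$ of minimal length. As in the proof of Theorem~\ref{thm:rankone}, $P$ is primitive, so it occurs in $\omega$ only at positions that are multiples of $|P|$; moreover $P$ begins with a non-spacer symbol because $\omega$ does. Write $P=Us^c$, where $s^c$ is the maximal run of trailing spacers, so that $U$ begins and ends with non-spacer symbols. The target is an $N$ for which every level $n\ge N$ is semi $1$-periodic (Definition~\ref{def:semi}) with respect to this same pair $(U,c)$, after which Proposition~\ref{prop:LocalDeficit} immediately delivers $LDC(n,1)$ for all $n\ge N$. The crux, and the step I expect to be the main obstacle, is to show that for all large $n$ every block $B(n,j)$ with $j\le K_n$ begins with $P$; this is the one place where uniqueness of the minimal path is essential. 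I would fix a level $m_0$ and, for each non-spacer vertex $v(m_0,w)$, form the rooted tree (each vertex having a unique minimal incoming edge) of all vertices whose chain of minimal incoming edges passes through $v(m_0,w)$, equivalently those $v(m,u)$ for which $B(m,u)$ begins with $B(m_0,w)$. Every level of this tree is finite, so were it infinite König's lemma would produce an infinite minimal path through $v(m_0,w)$; being distinct from $x_s$ it would equal $x$ by uniqueness, forcing $w=j_{m_0}$. Hence for each $w\ne j_{m_0}$ this tree is finite, and since level $m_0$ has finitely many vertices there is an $N(m_0)$ beyond which the minimal-edge chain of every $v(n,j)$, $j\le K_n$, runs through $v(m_0,j_{m_0})$, so that $B(n,j)$ begins with $B(m_0,j_{m_0})$. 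Choosing $m_0$ with $|B(m_0,j_{m_0})|\ge|P|$ (possible by (C\ref{cond:cond4})) and recalling that $B(m_0,j_{m_0})$ is a prefix of $\omega=P^\infty$ and so begins with $P$, I obtain that $B(n,j)$ begins with $P$ for all large $n$; primitivity then places every explicit occurrence of such a block at a multiple of $|P|$.

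It remains to upgrade this alignment to the semi $1$-periodic form. Here I would use the decomposition of $\omega$ into consecutive level-$n$ basic blocks $B(n,c_0)s^{b_0}B(n,c_1)s^{b_1}\cdots$, in which forward transitivity of $x$ guarantees that every vertex $v(n,c)$, $c\le K_n$, occurs. Each block is a phase-$0$ window of $(Us^c)^\infty$, hence of the form $(Us^c)^{t_i}Q_i$ with $Q_i$ a proper prefix of $P$; since the next block again starts at a multiple of $|P|$ and opens with the non-spacer $U$, the intervening spacers must carry the phase from the end of $Q_i$ exactly to a period boundary, which (as $P$ starts with a non-spacer) is possible only if $Q_i=Us^{l_i}$ with $0\le l_i\le c$ and the spacer gap equals the deficit $c-l_i$. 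This is exactly semi $1$-periodicity at level $n$ with $(U_n,c_n)=(U,c)$ for all large $n$, and Proposition~\ref{prop:LocalDeficit} then gives $LDC(n,1)$ for all large $n$, completing the proof.
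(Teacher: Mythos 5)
Your proof is correct and follows essentially the same route as the paper's: both hinge on showing (via uniqueness of the minimal path) that all sufficiently long basic blocks begin with the minimal period $P$ — your K\"onig's-lemma argument on minimal-edge trees is the same compactness argument as the paper's convergent subsequence of minimal paths — then aligning explicit appearances at multiples of $|P|$ by primitivity, writing $P=Us^c$ with maximal trailing spacer run, and finishing with Proposition~\ref{prop:LocalDeficit}. The only cosmetic difference is the middle combinatorial step, where the paper compares pairs of explicit appearances $B(n,i)s^mB(n,j)$ and $B(n,i)s^{m'}B(n,j')$ to pin down the spacer gaps while you run a phase count along the decomposition of $\omega$ into consecutive level-$n$ blocks; both yield semi $1$-periodicity with the deficit condition, and both (equally, and harmlessly) gloss over checking that this $U$ is the \emph{shortest} block required by Definition~\ref{def:semi}.
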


\begin{proof} Assume that the $1$-coding $\omega=\phi_1(x)$  of the minimal transitive path $x \in X$ is  periodic with minimal repeating block $P$.
As in the proof of Theorem \ref{thm:rankone}, $P$ appears only at multiples of $|P|$.
Because we are assuming that the procedure is growing, we may choose $N$ large enough to ensure that for $n\geq N$, $|B(n, j)| \geq 2|P|$ for all $j \leq K_n$.

We claim that once $n$ is large enough that all basic blocks $B(n,j), 1\leq j \leq K_n$, have length greater than $|P|$, they all begin with $P$.
For otherwise we can find infinitely many vertices $v(n_m,i_{n_m}), 1 \leq i_{n_m} \leq K_{n_m}$, for which $B(n_m,i_{n_m})$ does not begin with $P$ (and recall that they cannot begin with $s$).
For each $m$ let $x_m$ be an infinite path that is minimal from the root to $v(n_m,i_{n_m})$, so that the $1$-coding of $x_m$ begins with $B(n_m,i_{n_m})$. Let $x'$ be the limit of a convergent subsequence of the paths $x_m$.
Then $x'$ is a minimal path and
 each initial block of its $1$-coding is an initial block of the $1$-coding of some $x_m$, so that
 its $1$-coding cannot begin with $P$.
 Thus $x' \neq x$, contradicting our assumption about uniqueness of the minimal path. Hence we have shown that for sufficiently large $n$, all basic blocks begin with $P$.

Suppose that $G_1=B(n,i)s^mB(n,j)$ and $G_2=B(n,i)s^{m'}B(n,j')$ are explicit appearances in $\omega$ for some $i , j, j'  \leq K_{n}$ and $0\leq m<m'$.  The $P$'s cannot overlap; $B(n,i)$, $B(n,j)$, and $B(n,j')$ all begin with $P$; and $\omega=PPPP\dots$.
Consider the last place in the initial subblock $B(n,i)$ of $G_1$ at which an appearance of $P$ begins.
	The next place in $G_1$ which initiates an appearance of $P$ must be the first place in $B(n,j)$.
	Similarly, the next place in $G_2$ which initiates an appearance of $P$ must be the first place in $B(n,j')$.
	Therefore $m=m'$. (This is essentially the same argument as in the proof of Theorem \ref{thm:rankone}.)
So for every $n \geq N$ and $i \leq K_n$, there exists $m(n,i) \geq 0$ such that each explicit appearance of  $B(n,i)$  in $\omega$ is followed by $s^{m(n,i)}P$.

Choose $i \leq K_n$ such that for all $j \leq K_n$ we must have $|B(n, j) s^{m(n,j)}| \leq |B(n, i) s^{m(n,i)}|$.
  Then $B(n,j) s^{m(n,j)}  = P^t$ for some $t>0$ and $B(n, i)s^{m(n,i)} = P^{r+t}$ for some $r\geq0$ ($t$ and $r$ depend on $n,j,i$).
  Choose $c$ as large as possible so that  $P=Us^{c}$ for some block $U \in A_1^{*}$ (noting that $c \geq m(n,j)$ for all $j$).
   It follows that $B(n,i) = (Us^{c})^{r+t-1}(U)s^{c-m(n,i)}$ and $B(n,j) = (Us^{c})^{t-1}(U)s^{{c}-m(n,j)}$. Hence level $n$ is semi $k$-periodic with $U_n =U$. Then by Proposition \ref{prop:LocalDeficit}, every level $n>N$ satisfies $LDC(n,1)$.

   Conversely, assume that there is an $N\in \mathbb{N}$ such that every level $n \geq N$ satisfies $LDC(n,1)$.
    Then for every $n\geq N$,  \tb{$j,j''=1,\dots, K_n$},  and each pair of explicit appearances of $B(n,j)$ and $B(n,j'')$ in any recursion
   \be
B(n+1,i)=B(n,j)s^{a(n,j)}B(n,j')\dots B(n,j'')s^a
\en
   we have that the deficit $c_n-l(n,j)=a(n,j)$ and $a\leq c_n-l(n,j'')$. Now, define $\widetilde{B}(n,j)$ such that $B(n,j)=\widetilde{B}(n,j)s^{l(n,j)}$. Expanding an arbitrary block at level $n+1$ in terms of the blocks at level $n$, we have
   \begin{align}
       B(n+1,i)& = \widetilde{B}(n,j)s^{c_n}\widetilde{B}(n,j')s^{c_n}\dots
   \end{align}
     Since every $n \geq N$ satisfies $LDC(n,1)$, by Proposition \ref{prop:LocalDeficit} we have that $c_n=c_{n-1}=\dots=c_N$. Hence,
     \begin{align}\label{eq:converse1}
         B(n+1,i) & =
    \widetilde{B}(n,j)s^{c_N}\widetilde{B}(n,j')s^{c_N}\dots.
     \end{align}
 Repeat these observations with $n+1,n$ replaced by $n, n-1$ and so on to arrive (for appropriate $d,d',\dots$) at
        \begin{align}
       B(n+1,i) &  =
    \widetilde{B}(N,d)s^{c_N}\widetilde{B}(N,d')s^{c_N}\dots
    \end{align}
     Further, since level $N$ is semi 1-periodic, \tb{we have that}
      \be
      \begin{aligned}
      \widetilde{B}(N,d) s^{c_N}&= \widetilde{B}(N,d)s^{l(N,d)}s^{a(N,d)}=B(N,d)s^{a(N,d)}\\
      	&=(U_Ns^{c_N})^{t(N,d)}U_Ns^{l(N,d)}s^{a(N,d)}=(U_Ns^{c_N})^{t(N,d)}U_Ns^{c_N}.
      	\end{aligned}
      \en

     Since the blocks continue to grow, by taking $n$ sufficiently large one can obtain an arbitrarily long string $(U_Ns^{c_N})^m$. Then since this is true for every block at level $n$, it is true for the block through which the unique minimal and transitive path passes at level $n$.
     We have shown that the $1$-coding of the unique minimal and transitive path in $X$ begins with an arbitrarily long string of concatenations of $(U_Ns^{c_N})$, so the 1-coding is necessarily periodic.
  \end{proof}

\begin{remark}\label{rem:MultipleMinPaths}
	
		If there is a transitive path with a periodic 1-coding, then every path has a periodic 1-coding, and the codings of the transitive paths are all (possibly truncated) shifts of one another. 
		\tb{Figure \ref{fig:TwoMinPaths} presents} a system which has two minimal transitive paths with periodic 1-codings that are shifts of each other.
		Theorem \ref{thm:general} can be generalized to handle diagrams with more than one transitive minimal path by extending the definitions of $k$-periodic (Definition \ref{def:semi}) and $LDC(n,k)$ (Definition \ref{def:locdeficit}) appropriately, focusing on sets of minimal paths that have the same codings and the same sets of basic blocks assigned to their vertices.
	\end{remark}

		\begin{figure}
		\begin{minipage}[t]{.45\textwidth}
			\begin{center}
				 \begin{tikzpicture}
				\foreach \i in {0,2,4}{
					\foreach \j in {0,2,4,6}{
						\fill(\i,\j) circle (4pt);}}
				\fill(2,7) circle (4pt);
				\draw(2,7) -- (0,6);
				\draw(2,7) --(2,6);
				\draw(2,7)--(4,6);
				\draw[rounded corners](0,6)--(-.5,5)--(0,4);
				\draw[rounded corners](0,6)--(.5,5)--(0,4);
				\draw(0,6)--(2,4);
				\draw(2,6)--(0,4);
				\draw[rounded corners](2,6)--(1.5,5)--(2,4);
				\draw[rounded corners](2,6)--(2.5,5)--(2,4);
				\draw[rounded corners](4,6)--(2,5.25)--(0,4);
				\draw[rounded corners](4,6)--(2,4.75)--(0,4);
				\draw[rounded corners](4,6)--(3,5.25)--(2,4);
				\draw[rounded corners](4,6)--(3,4.75)--(2,4);
				\draw(4,6)--(4,0);
				\foreach \i in {2,4}{
					\draw[rounded corners](0,\i)--(-.5,\i-1)--(0,\i-2);
					\draw[rounded corners](0,\i)--(.5,\i-1)--(0,\i-2);
					\draw(0,\i)--(2,\i-2);
					\draw(2,\i)--(0,\i-2);
					\draw[rounded corners](2,\i)--(1.5,\i-1)--(2,\i-2);
					\draw[rounded corners](2,\i)--(2.5,\i-1)--(2,\i-2);
					\draw(4,\i)--(0,\i-2);
					\draw(4,\i)--(2,\i-2);
					\node at (-.4,\i-1.4){\tiny{1}};
					\node at (.45,\i-1)[fill=white]{\tiny{4}};
					\node at (.6,\i-1.3)[fill=white]{\tiny{2}};
					\node at (.5,\i-1.9){\tiny{3}};
					\node at (1.4,\i-1.6){\tiny{3}};
					\node at (1.8,\i-1.4){\tiny{1}};
					\node at (2.2,\i-1.4){\tiny{4}};
					\node at (2.6,\i-1.6){\tiny{2}};
					
				}
				\draw(4,6)--(2,4);
				\node at (0,4) [rectangle,draw, fill=white]{$01s^20$};
				\node at (2,4)[rectangle,draw,fill=white]{$1s^201s$};
				\node at (0,6)[rectangle,draw,fill=white]{$0$};
				\node at (2,6)[rectangle,draw,fill=white]{$1$};
				\node at (4,6)[rectangle,draw,fill=white]{$s$};
				\node at (4,4)[rectangle,draw,fill=white]{$s$};
				
				\end{tikzpicture}
				\subcaption{A system with two minimal forward transitive paths. The diagram is stationary after level 2.}\label{fig:TwoMinPaths}
			\end{center}
			\end{minipage}\hfill\begin{minipage}[t]{.45\textwidth}
			\begin{center}
\begin{tikzpicture}
\foreach \i in {0,2,4}{
\draw(2,7)--(\i,6);
\foreach \j in {0,2,4,6}{
 \fill(\i,\j) circle (4pt);}}
 \fill(2,7) circle (4pt);
 \foreach \i in {0,2}{
 \foreach\j in {0,2,4}{
 \draw(\i,\j)--(0,\j+2);
 \draw(\i,\j)--(2,\j+2);
 \draw(\i,\j)--(4,\j+2);}}
 \draw(4,6)--(4,0);
 \foreach \i in {0,2}{
 \node[circle,fill=white, inner sep=0pt]at (-.2,\i+.4){\tiny 1};
 \node[circle,fill=white, inner sep=0pt] at (.4,\i+.6){\tiny 3};
 \node[circle,fill=white, inner sep=1pt]at (0.6,\i+.1){\tiny 2};
 \node[circle,fill=white, inner sep=0pt]at (1.6,\i+.15){\tiny 3};
 \node[circle,fill=white, inner sep=1pt]at (2,\i+.4){\tiny 1};
 \node[circle,fill=white, inner sep=1pt]at (2.4,\i+.15){\tiny 2};
}
 \node[circle,fill=white, inner sep=0pt]at (-.2,4.4){\tiny 1};
 \node[circle,fill=white, inner sep=0pt]at (.3,4.5){\tiny 2};
 \node[circle,fill=white, inner sep=0pt]at (0.5,4.1){\tiny 3};
  \node[circle,fill=white, inner sep=0pt]at (1.6,4.25){\tiny 1};
 \node[circle,fill=white, inner sep=1pt] at (2,4.4){\tiny 2};
 \node[circle,fill=white, inner sep=1pt]at (2.45,4.2){\tiny 3};
\end{tikzpicture}
\end{center}
\subcaption{The 1-coding is periodic, but the 2-coding is not.}
	\label{fig:notper}
\end{minipage}
\caption{}
		\end{figure}

\begin{example}\label{ex:someper}
Unlike in the rank one case, having a transitive path for which the $k$-coding is periodic for a particular $k$ does not imply that every $k$-coding is periodic. Consider the following example. 
\tb{Figure \ref{fig:notper} shows part of the associated Bratteli-Vershik system, which is stationary after level 2.}
 Then the $1$-coding is periodic with least period $P_1=01ss$. However, the $2$-coding is not periodic.
 This example reappears as Example \ref{ex:someiso}.

	\be
\begin{aligned}
  B(2,1) & =B(1,1)B(1,2)s \\
  B(2,2) & =B(1,1)B(1,2)s\\
\end{aligned}
\en
And for $n\geq 3$,
\be
\begin{aligned}
  B(n,1) & =B(n-1,1)sB(n-1,2) \\
  B(n,2) & =B(n-1,2)sB(n-1,1)
\end{aligned}
\en
\end{example}

\begin{corollary}
  Every $k$-coding is periodic if and only if for every $k$ there is $N>k$ such that every $n \geq N$ satisfies the local deficit condition with respect to level $k$ ($LDC(n,k)$).
\end{corollary}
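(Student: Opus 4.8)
The plan is to reduce the general-$k$ statement to the case $k=1$ already settled in Theorem~\ref{thm:general}, using the telescoping already invoked in that proof, and then to quantify the resulting equivalence over all $k$. Fix $k\geq 1$ and telescope the diagram $\mathcal B$ from the root to level $k$, producing a diagram $\mathcal B^{(k)}$ whose level $m\geq 1$ is the old level $m+k-1$: for $m\geq 2$ the edges from level $m-1$ to level $m$ are the old edges from level $m+k-2$ to level $m+k-1$, while the edges from the new root to the new level $1$ represent the old paths from the root to level $k$. By the discussion surrounding the recursion (\ref{eq:recursion}), the basic blocks of $\mathcal B^{(k)}$ in its $1$-coding are exactly the $k$-coding blocks of $\mathcal B$, namely $B_{\mathrm{tel}}(m,j)=B^{(k)}(m+k-1,j)$, the telescoped spacer symbol is $s_k$, and the $1$-coding of a path in $\mathcal B^{(k)}$ equals the $k$-coding of the corresponding path in $\mathcal B$.

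First I would check that $\mathcal B^{(k)}$ again satisfies conditions (C\ref{cond:cond1})--(C\ref{cond:cond4}) and possesses a unique minimal forward transitive path in $X'=X\setminus\{x_s\}$, since these are exactly the hypotheses needed to invoke Theorem~\ref{thm:general}. All of them pass through the natural path identification: $(K_{\mathrm{tel}})_m=K_{m+k-1}\geq 1$ gives (C\ref{cond:cond1}); (C\ref{cond:cond2}) holds because the spacer vertices of $\mathcal B^{(k)}$ are the old spacer vertices, which by (C\ref{cond:cond2}) for $\mathcal B$ form a single chain, so the new spacer vertex at each level $m$ keeps a unique incoming edge (from the previous spacer vertex when $m\geq 2$, and from the new root, $=v(0,K_0+1)$, when $m=1$); (C\ref{cond:cond3}) holds because $B_{\mathrm{tel}}(m,j)=B^{(k)}(m+k-1,j)$ does not begin with $s_k$; and (C\ref{cond:cond4}) holds because $|B_{\mathrm{tel}}(m,j)|=\dim(m+k-1,j)\to\infty$. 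Telescoping is a homeomorphism of path spaces fixing $x_s$ and conjugating the adic transformations, so minimality, forward transitivity, and uniqueness of the minimal path in $X'$ all transfer.

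Next I would match the two relevant definitions under this identification. Because semi $k$-periodicity and $LDC(n,k)$ refer only to the basic blocks at two consecutive levels and to the way the spacer symbol $s_k$ is interspersed, and these objects are literally carried over by telescoping, level $n$ of $\mathcal B$ is semi $k$-periodic if and only if level $n-k+1$ of $\mathcal B^{(k)}$ is semi $1$-periodic (with the same distinguished word $U_n$ and the same $c_n$), and $LDC(n,k)$ holds for $\mathcal B$ if and only if $LDC(n-k+1,1)$ holds for $\mathcal B^{(k)}$. Applying Theorem~\ref{thm:general} to $\mathcal B^{(k)}$ therefore gives, for each fixed $k$, the equivalence: $\phi_k(x)$ is periodic if and only if there is an $N>k$ such that every level $n\geq N$ of $\mathcal B$ satisfies $LDC(n,k)$ (taking $N=N'+k-1$, where $N'$ is the threshold Theorem~\ref{thm:general} provides for $\mathcal B^{(k)}$).

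Finally, quantifying over $k$ yields the corollary at once: every $k$-coding is periodic precisely when $\phi_k(x)$ is periodic for each $k$, which by the fixed-$k$ equivalence just established holds precisely when, for each $k$, there is an $N>k$ with $LDC(n,k)$ for all $n\geq N$. The mathematical content is thus entirely contained in Theorem~\ref{thm:general}; I expect the only real obstacle to be the careful (though routine) verification that telescoping preserves conditions (C\ref{cond:cond1})--(C\ref{cond:cond4}) together with the uniqueness and forward transitivity of the minimal path, as these are precisely what license the appeal to that theorem.
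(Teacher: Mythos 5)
Your proposal is correct and is exactly the paper's approach: the paper's entire proof is ``Telescope between the root and level $k$ and apply Theorem \ref{thm:general}.'' Your write-up simply fills in the routine verifications (preservation of (C\ref{cond:cond1})--(C\ref{cond:cond4}), of the unique minimal forward transitive path, and the identification of $LDC(n,k)$ with the telescoped $LDC(n-k+1,1)$) that the paper leaves implicit.
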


\begin{proof}
Telescope between the root and level $k$ and apply Theorem \ref{thm:general}.
\end{proof}

\begin{remark}
		For a diagram as in Theorem \ref{thm:general}, there exists a telescoping such that every $k$-coding is periodic if and only if in the original diagram every $k$-coding is periodic.
		(This is because the number of paths from the root to level $k$ and the partial ordering of those paths are preserved under telescoping, so the coding itself remains completely unchanged, and hence whether the local deficit condition, which includes semi $k$-periodicity, is satisfied or not also remains unchanged.)
	\end{remark}

As stated in the Introduction, if every $k$-coding is periodic, then with respect to any invariant ergodic measure the system is isomorphic to an odometer. If in addition eventually the spacer path stops branching, we will have the system {with the spacer path removed} topologically conjugate to an odometer.

\begin{proposition}\label{proposition:2}
 Assume that the diagram and associated recursive symbolic construction satisfy \tb{the conditions (C\ref{cond:cond1})--(C\ref{cond:cond4})} specified in Section \ref{sec:setup}.
 If the ordered Bratteli diagram can be telescoped so that every level satisfies the local deficit condition and there exists an $N$ such that for all $n\geq N$ $|s^{-1}v(n,K_n+1)|=1$, then the resulting system is topologically conjugate to an odometer together with one {isolated} fixed path.
\end{proposition}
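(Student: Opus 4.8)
The plan is to peel off the spacer path and then identify the remainder directly as an odometer via the phases of the periodic $k$-codings. First I would use the hypothesis $|s^{-1}v(n,K_n+1)|=1$ for $n\ge N$: as observed in Section~\ref{sec:setup} it makes $x_s$ an isolated path, and since $T(x_s)=x_s$ it is an isolated fixed point. Thus $\{x_s\}$ is clopen, $X'=X\setminus\{x_s\}$ is a compact $T$-invariant set, and $(X,T)$ splits topologically and dynamically as the disjoint union of $(X',T|_{X'})$ with the one-point fixed system; it remains to conjugate $(X',T|_{X'})$ to an odometer. Because the telescoped diagram has every level satisfying the local deficit condition, the corollary to Theorem~\ref{thm:general} gives that every $k$-coding is periodic, and the remark following Lemma~\ref{lemma:transpath}, applied to $X'$, provides a minimal forward transitive path $x_{\min}\in X'$ (if there were several minimal transitive paths, Remark~\ref{rem:MultipleMinPaths} shows their codings are shifts of one another, which does not affect what follows).

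Fix $k\ge1$, let $P_k$ be the minimal repeating block of the $k$-coding and $p_k=|P_k|$. By Theorem~\ref{thm:general} and Proposition~\ref{prop:LocalDeficit}, for all large $m$ the data stabilize to a common $U$ and $c$ with $P_k=Us_k^{c}$, and each basic block has the semi-$k$-periodic form $B^{(k)}(m,j)=(Us_k^{c})^{t(m,j)}Us_k^{l(m,j)}$ with $l(m,j)\le c$; in particular every $B^{(k)}(m,j)$ is an \emph{initial segment of $P_k^\infty$}, and, by the concatenation computation in the converse half of the proof of Theorem~\ref{thm:general}, every explicit appearance of a level-$m$ block inside a level-$m'$ block ($k<m<m'$) starts at a position divisible by $p_k$. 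For $y\in X'$ let $h_m(y)\in\{0,\dots,\dim(m,j)-1\}$ be the height of $y$ in the level-$m$ tower it meets, i.e.\ the rank of its first $m$ edges in the minimal order into $v(m,j)$. The alignment just noted gives $h_{m'}(y)\equiv h_m(y)\pmod{p_k}$ for all $m'>m>k$, so
\[
\psi_k(y):=h_m(y)\bmod p_k\qquad(\text{any }m>k)
\]
is well defined, and since $h_m$ is constant on each cylinder of $\alpha_m$, every fibre $\psi_k^{-1}(i)$ is a finite union of cylinders, hence clopen; thus $\psi_k\colon X'\to\mathbb Z/p_k\mathbb Z$ is continuous. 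Moreover $\alpha_k(y)=(B^{(k)}(m,j))_{h_m(y)}=(P_k)_{\psi_k(y)}$ because $B^{(k)}(m,j)$ is a prefix of $P_k^\infty$, and $T$ increases the height by one inside a tower, so $\psi_k(Ty)=\psi_k(y)+1$ wherever $T$ is defined.

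I would then assemble the conjugacy. Since the $k$-coding is the image of the $(k+1)$-coding under the one-block map $\pi_{k+1,k}$, we have $p_k\mid p_{k+1}$, and reduction modulo $p_k$ carries $\psi_{k+1}$ to $\psi_k$; hence $\Psi:=(\psi_k)_{k\ge1}$ is a continuous map of $X'$ into the odometer $\mathcal O$ that is the inverse limit of the groups $\mathbb Z/p_k\mathbb Z$ under these reductions, and $\Psi\circ T=S\circ\Psi$ for the adding machine $S$ wherever $T$ is defined. The map $\Psi$ is injective, for $\Psi(y)=\Psi(y')$ forces $\alpha_k(y)=(P_k)_{\psi_k(y)}=\alpha_k(y')$ for all $k$, so $y$ and $y'$ agree on every initial segment and hence coincide; it is surjective because $\psi_k(T^rx_{\min})=r\bmod p_k$, so $\Psi$ sends the forward orbit of $x_{\min}$ to the dense image of $\mathbb Z$ in $\mathcal O$, and $\Psi(X')$ is compact. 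A continuous bijection of compact Hausdorff spaces is a homeomorphism, so $\Psi$ is a homeomorphism; putting $T|_{X'}=\Psi^{-1}\circ S\circ\Psi$ gives the continuous extension of the adic map (the usual convention sending the maximal path of $X'$ to $x_{\min}$) and realizes the desired conjugacy.

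The genuinely substantive point, which is why this proposition says more than the measure-theoretic isomorphism recorded just before it, is the \emph{topological} upgrade, and it is exactly here that isolating $x_s$ is used. Two things must hold that can fail for general Bratteli--Vershik systems with periodic codings: the phase maps $\psi_k$ must be continuous, and $T|_{X'}$ must be an honest homeomorphism rather than a partial map. Continuity I get for free from the identity $\psi_k=h_m(\cdot)\bmod p_k$, which is available precisely because the local deficit condition forces each $B^{(k)}(m,j)$ to be a prefix of $P_k^\infty$ with block appearances aligned to multiples of $p_k$; bounded spacer runs (Remark~\ref{rem:lotsofspacers}) keep this normal form globally valid. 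The homeomorphism property is the step I expect to require the most care: in general the adic map is defined only off the maximal paths, and if $x_s$ were non-isolated then $X'$ would not even be closed and the fixed point $x_s$ would obstruct both equicontinuity and the surjectivity of any conjugacy onto a minimal odometer. Isolating $x_s$ removes this obstruction; the remaining care is to verify from the semi-$k$-periodic normal form that $X'$ has a single maximal path to be identified with $x_{\min}$, so that the continuous extension above is genuinely the Vershik map.
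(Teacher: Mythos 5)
Your argument is essentially correct, but it takes a genuinely different route from the paper's proof, which is a three-line reduction to prior work. In the paper, the hypothesis $|s^{-1}v(n,K_n+1)|=1$ for all $n\geq N$ is used to telescope to level $N+1$, after which no edge leaves the spacer chain, so the diagram literally decomposes into the isolated fixed path plus a subdiagram whose paths never meet a spacer vertex; on that spacer-free subdiagram the local deficit condition collapses to exact periodicity of every basic block (all inserted spacer strings, hence all deficits, are zero), which is precisely the \emph{uniformly ordered} condition of \cite{FPS2017}; the paper then invokes the theorem of \cite{FPS2017} that infinitely many uniformly ordered levels give topological conjugacy to an odometer. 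You instead keep the spacers inside the blocks and re-prove that ingredient from scratch, via the phase maps $\psi_k$ (tower height reduced modulo $p_k$), their coherence under the alignment of explicit appearances at multiples of $p_k$, and a continuous equivariant bijection onto the inverse limit of the groups $\mathbb Z/p_k\mathbb Z$. Your route is longer but self-contained: it exhibits the conjugacy and the odometer explicitly, and---via your injectivity argument, since every minimal path of $X'$ has all phases equal to $0$---it \emph{derives} the uniqueness of the minimal (hence also of the maximal) path of $X'$ rather than assuming or separately verifying it. The paper's route buys brevity and the conceptual point that the spacer hypothesis converts ``periodicity up to a deficit'' into honest periodicity.

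Two loose ends should be repaired, though neither is fatal. First, the results you cite at the outset are not literally applicable: Lemma \ref{lemma:transpath} assumes there are infinitely many $n$ with $|s^{-1}v(n,K_n+1)|>1$, the opposite of the present hypothesis, and the corollary to Theorem \ref{thm:general} inherits from that theorem the assumption of a unique minimal forward transitive path, which is not part of Proposition \ref{proposition:2}. Fortunately your construction does not need them: the stabilized normal form $B^{(k)}(m,j)=(Us_k^{c})^{t(m,j)}Us_k^{l(m,j)}$ with common $U,c$ follows from iterating Proposition \ref{prop:LocalDeficit}, the alignment of explicit appearances follows from the concatenation computation, and for surjectivity any minimal path of $X'$ suffices because the nonnegative integers are automatically dense in the inverse limit---transitivity plays no role. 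Second, the maximal-path issue you defer at the end is already settled by what you proved: since $\Psi$ is a bijection sending every minimal path to $\bar 0$, $X'$ has a unique minimal path, and since $T$ maps non-maximal paths bijectively onto non-minimal paths, $\Psi$ carries the set of maximal paths onto $S^{-1}(\bar 0)$, so there is also a unique maximal path; the extension $T(x_{\max})=x_{\min}$ is then forced and is conjugated by $\Psi$ to $S$. You should also record that $p_k\to\infty$ (so the odometer is infinite), which follows from (C\ref{cond:cond4}) since that condition makes $X'$ infinite while $\Psi$ is injective, and that $Us_k^{c}$ really is the \emph{minimal} period $P_k$, which uses the minimality of $U$ in Definition \ref{def:semi} and is what makes the divisibility $p_k\mid p_{k+1}$ clean.
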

\begin{proof}
For all $n\geq N$ there are no edges between $v(n,K_n+1)$ and $v(n+1,j)$ for any $j=1,\dots K_{n+1}$. Therefore, if we telescope to level $N+1$, the new diagram has the property that for all $n$, $v(n,K_n+1)$ has only one source, and as such the diagram decomposes into the isolated path and the rest of the diagram. Then if we exclude the isolated path, the remaining diagram is uniformly ordered according to the definition in \cite{FPS2017}.
\end{proof}

\section{Examples}\label{sec:examples}
In this section we systematically consider examples of various Bratteli-Vershik systems in regard to their level-$k$-factors: where all, some, or none of the $k$-factors are finite, and where all, some, or none of the $k$-factors are isomorphic to the original Bratteli-Vershik system with a fully supported ergodic invariant measure.
In particular, Example \ref{ex:someiso} shows that, unlike in the rank one case, it is possible to have some but not all $k$-factors finite, and some but not all isomorphic to the full system.
In the figures, the diagrams are assumed to continue in a stationary manner.

\begin{example}
  \label{ex:AllFiniteAndIso}
  We first consider the case in which every level-$k$ factor is finite and each $k$-factor is isomorphic to the original system. This implies that the original system contains finitely many points. In order for the original system to be isomorphic to its level-$1$ factor, the Bratteli-Vershik presentation must have the same number of edges from the root to level 1 as the number of points in the system.
  Then for any $n\geq 1$, and any $v(n,j)$, $|s^{-1}(v(n,j))|=1$. See Figure \ref{fig:AllFiniteAndIso}. This class of systems is not considered in our previous theorems, but its members are simple to examine separately.
  \end{example}

\begin{figure}
\begin{minipage}[t]{.45\textwidth}
  \centering
  \begin{tikzpicture}
    \foreach \i in {0,2,4}{
    \foreach \j in {0,2,4}{
    \fill(\i,\j) circle (4pt);}}
    \fill(2,5) circle (4pt);
    \foreach \i in {0,4}{
    \draw(2,5)--(\i,4);}
    \draw[rounded corners](2,5)--(.7,4.6)--(0,4);
    \draw[rounded corners](2,5)--(1.2,4.3)--(0,4);
    \draw[rounded corners](2,5)--(1.8,4.5)--(2,4);
    \draw[rounded corners](2,5)--(2.2,4.5)--(2,4);
    \foreach\i in {0,2,4}{
    \foreach \j in {2,4}{
    \draw(\i,\j)--(\i,\j-2);}}
  \end{tikzpicture}
  \subcaption{All $k$-factors are finite and isomorphic to the original system.}\label{fig:AllFiniteAndIso}
\end{minipage}\hfill  \begin{minipage}[t]{.45\textwidth}
  \centering
  \begin{tikzpicture}
   \foreach \i in {0,2}{
    \foreach \j in {0,2,4}{
    \fill(\i,\j) circle (4pt);}}
    \fill(1,5) circle (4pt);
    \draw(1,5)--(0,4)--(0,0);
    \draw(1,5)--(2,4)--(2,0);
    \foreach \i in {2,4}{
    \draw[rounded corners](0,\i)--(-.3,-1+\i)--(0,-2+\i);
    \draw[rounded corners](0,\i)--(.3,-1+\i)--(0,-2+\i);
    \draw(2,\i)--(0,-2+\i);
    \node at (-.3,-1.4+\i) {\tiny 1};
    \node at (0,-1.25+\i) [circle,fill=white, inner sep=1pt]{\tiny 2};
    \node at (.3,-1.3+\i) {\tiny 4};
    \node at (.75,-1.5+\i) {\tiny 3};
    }
  \end{tikzpicture}
  \subcaption{The Chacon system as an adic, where the numbers specify the edge orderings.}\label{fig:Chacon}
  \end{minipage}
  \caption{}
\end{figure}

\begin{example}
  \label{ex:AllPeriodicNotIsomorphic}
  Our next example is precisely the focus of much of the paper: a system for which every $k$-factor is finite, and none are isomorphic to the original system. In this case, given a finite ergodic measure, the system is isomorphic to an odometer. We give the recursion.
  \be \begin{aligned}
K_1=2 &\hskip .1in B(1,1)=D_1\\
 &\hskip .1in B(1,2)=D_2\\
\\
K_2=2 &\hskip .2in B(2,1)=B(1,1)sB(1,2)s^3B(1,1)s1s\\
&\hskip .2inB(2,2)=B(1,1)sB(1,2)s^3B(1,1)s1s^2\\
\\
K_3=2 &\hskip .2in B(3,1)=B(2,1)s^2B(2,1)s^2B(2,2)s\\
&\hskip .2in       B(3,2)=B(2,1)s^2B(2,1)s^2B(2,2)sB(2,1)s^2B(2,1)s^2B(2,2)\\
\\
K_4= & \hskip.2in  B(4,1)=B(3,1)B(3,2)sB(3,1)B(3,2)s\\
&\hskip .2in       B(4,2)=B(3,1)B(3,2)sB(3,1)B(3,2).
\end{aligned}\en
We assume the remaining levels are the same as level 4.
Each level $n$ satisfies $LDC(n,k)$ for all $k \leq n$.
As such we know that every $k$-factor is finite.
\end{example}

\begin{example}
 There are many examples for which every $k$-factor is isomorphic to the original system. One such example was presented in \cite{AFP} and is isomorphic to an odometer. Another classical example is the Chacon system, pictured in Figure \ref{fig:Chacon}.

\end{example}

\begin{example}
  An example of a system in which every $k$-factor is infinite, but none are isomorphic to the whole system, was mentioned in the Introduction. An infinite entropy, uniquely ergodic, simple, properly ordered Bratteli-Vershik system \tb{(such exist by the Jewett-Krieger \cite{Jewett1970,Krieger1972} and Herman-Putnam-Skau \cite{HPS1992} theorems)} {\em cannot} be isomorphic to any of its $k$-factors.
\end{example}

  We now consider examples when the original system is isomorphic to some, but not all of its $k$-factors.
\begin{example}\label{ex:someiso}
We begin when  only some of the $k$-factors are finite. Since having a finite $k$-factor implies that any level-$k'$ factor with $k'<k$ is also finite, we must have that only finitely many of the $k$-factors are finite, otherwise all would be finite.
We may then give an isomorphic presentation in which {\em no} $k$-factor is finite by telescoping past the last level in which the $k$-factor is finite. Additionally, if you telescope to a level for which the $k$-factor is isomorphic to the original system, you will have a presentation for which every $k$-factor is isomorphic to the original system.
Example \ref{ex:someper} presents a system with a periodic $1$-coding but an aperiodic $2$-coding which determines a symbolic system that is essentially 2-expansive. \tb{(Removing spacer symbols from any level-2 coding leaves either an empty sequence or a sequence in the Prouhet-Thue-Morse system, which is recognizable. See also \cite[Theorem 5.1]{Berthe2017}.)}

\end{example}

\begin{example}
 We consider the case of a system in which each $k$-factor is infinite, and some, but not all, are isomorphic to the original system. If a $k$-factor is isomorphic to the original system, then for all $k'>k$, the level-$k'$ factor is also isomorphic to the original system.
 Therefore, in any such system only the first finitely many $k$-factors can fail to be isomorphic to the original system. In this case there is an equivalent presentation obtained by telescoping past the first level for which the $k$-factor is isomorphic to the full system.
 Then we have a system in which every $k$-factor is isomorphic to the original system. However, the first presentation may be of interest for technical reasons. One simple way to create such an example would be to attach the systems in Figures \ref{fig:Chacon} and \ref{fig:notper}, having them share the spacer reservoir.
 This would satisfy the conditions.
 In \cite{FPS2017} the systems are isomorphic to their level-3 factors, but it is unknown whether they are also isomorphic to their level-1 factors.
\end{example}

\bibliographystyle{mscplain}
\bibliography{PeriodicBiblio2020_03_13}

\end{document}